\documentclass[leqno,a4paper]{article}
\usepackage{amsmath,bbm, amsthm,amssymb}
\usepackage{eufrak}

\newcommand{\mK}{\mathcal{K}}

\newcommand{\be}{\boldsymbol{e}}
\newcommand{\bu}{\boldsymbol{u}}

\newcommand{\ff}{\boldsymbol{f}}

\newcommand{\bC}{\boldsymbol{C}}

\newcommand{\bL}{\boldsymbol{L}}
\newcommand{\bS}{\boldsymbol{S}}
\newcommand{\bT}{\boldsymbol{T}}

\newcommand{\R}{{\mathbb R}}

\newcommand{\ox}{{\overline{x}}}
\newcommand{\oy}{{\overline{y}}}

\newcommand{\na}{{\nabla}}
\newcommand{\pa}{{\partial}}
\newcommand{\al}{{\alpha}}
\newcommand{\te}{{\theta}}
\newcommand{\bt}{{\beta}}

\newcommand{\ro}{{\rho}}

\newcommand{\la}{{\lambda}}
\newcommand{\de}{{\delta}}
\newcommand{\om}{{\omega}}
\newcommand{\sg}{{\sigma}}
\newcommand{\z}{{\zeta}}

\newcommand{\Om}{{\Omega}}
\newcommand{\De}{{\Delta}}
\newcommand{\Ov}{{\overline{\Omega}}}
\newcommand{\Ga}{{\Gamma}}

\newcommand{\ph}{{\phi}}
\newcommand{\cf}{{\widetilde{f}}}

\def\vs1{\vspace{1ex}}
\newtheorem{definition}{Definition}[section]
\newtheorem{theorem}{Theorem}[section]

\newtheorem{corollary}{Corollary}[section]
\newtheorem{remark}{Remark}[section]

\newtheorem{acknowledgement}{Acknowledgement}[section]
\numberwithin{equation}{section}
\def\be{\begin{equation}}
\def\ee{\end{equation}}
\def\ed{\end{document}}
\begin{document}
\title{\bf\normalsize H-log spaces of continuous functions, potentials, and
elliptic boundary value problems.}
\author{by H.~Beir\~ao da Veiga}
\maketitle
\begin{abstract}
In these notes we study a family of Banach spaces, denoted $\,
D^{0,\,\al}(\Ov)\,,$ $\,\al \in\,\R^+\,,$ and called H-log spaces.
For $\,0<\,\la\leq\,1\,,$ one has $ C^{0,\,\la}(\Ov)\subset
D^{0,\,\al}(\Ov) \subset\,C(\Ov)\,,$ with compact embedding. These
spaces present the following "intermediate" regularity behavior.
Solutions $\,u\,$ of second order linear elliptic boundary value
problems, under "external forces" $\,f\in\, D^{0,\,\al}(\Ov)\,$ for
some $\,\al>\,1\,,$ satisfy $\,\na^2\,u\in\,
D^{0,\,\al-\,1}(\Ov)\,$. This result is optimal, since
$\,\na^2\,u\in\, D^{0,\,\beta}(\Ov)\,,$  for some $\,\beta
>\,\al-1\,,$ is false in general. We present a preliminary study on this subject.

\vspace{.2cm}

{\bf Mathematics Subject Classification}: 31B10,\,31B35,\, 33E30,
\,35A09,\\\,35B65, \,35J25\,.

\vspace{.2cm}

{\bf Keywords.} linear elliptic boundary value problems, classical
solutions, continuity properties of higher order derivatives, Banach
spaces of continuous functions.
\end{abstract}
\bibliographystyle{amsplain}
\section{Motivation.}\label{motives}
Assume that one looks for subspaces $\,X(\Ov)\,$ of $\,C(\Ov)\,,$
\emph{as large as possible}, such that solutions $\,u\,$ of second
order linear elliptic boundary value problems, under "external
forces" $\,f \in X(\Ov)\,,$ have second order continuous derivatives
$\,\na^2\,u\,$ \emph{up to the boundary}. It is well known that
$\,X(\Ov)=\,C(\Ov)\,$ is too large. On the other hand, the classical
H\"older spaces $\,C^{0,\,\la}(\Ov)\,$ are too narrow. In reference
\cite{BVJDE}, dedicated to the Euler two dimensional evolution
equations, we have appealed to a Banach space $\,C_*(\,\Ov)\,$,
suitable for the above purposes. If $\,f\in\,C_*(\,\Ov)\,$ then
$\,\na^2\,u\in\,C(\,\Ov)\,.$ However, full regularity $\,\na^2\,u
\in\,C_*(\,\Ov)\,$ is unreachable. This leads to the following
question. For generical data $\,f\in\,C_*(\,\Ov)\,$, is there some
additional, significant regularity for $\,\na^2\,u \,,$ besides
continuity? Recall that, in the framework of H\"older spaces, full
regularity is restored, since $\,\na^2\,u \,$ and $\,f\,$ have the
same regularity. This different behavior leads us to look for
intermediate situations. In these notes we define and study a family
of Banach spaces, denoted here $\, D^{0,\,\al}(\Ov)\,,$
$\,\al>\,0\,,$ and called H-log spaces. For $\,\al>\,1>\,\bt>\,0\,,$
and $\,0<\,\la\leq\,1\,,$ one has $ C^{0,\,\la}(\Ov)\subset
D^{0,\,\al}(\Ov) \subset \,C_*(\Ov) \subset
D^{0,\,\beta}(\Ov)\subset\,C(\Ov)\,,$ with compact embeddings. These
spaces present the following "intermediate" regularity behavior. If
$\,f\in\, D^{0,\,\al}(\Ov)\,$ for some $\,\al>\,1\,,$ then
$\,\na^2\,u\in\, D^{0,\,\al-\,1}(\Ov)\,$. This result is optimal. We
present here a preliminary study on this subject. For some
information on related references see the acknowledgment, at the end
of section \ref{one}.

\section{Results.}\label{introd}
In the following $\Om$ is an open, bounded, connected set in
$\R^n\,$, locally situated on one side of its boundary $\,\Ga\,.$
The boundary $\,\Ga\,$ is of class $\,C^{2,\,\la}\,,$ for some
$\,\la\,,$ $\,0<\,\la \leq \,1\,.$\par%
By $\,C(\Ov)\,$ we denote the Banach space of all real continuous
functions $\,f\,$ defined in $\,\Ov\,$. The classical "sup" norm is
denoted by $ \|\,f\,\|\,. $ We also appeal to the classical spaces
$\,C^k(\Ov)\,$ endowed with their usual norms $ \|\,u\,\|_k\,,$ and
to the H\"older spaces $\,C^{0,\,\la}(\Ov)\,,$ endowed with the
standard semi-norms and norms, denoted here by the non-standard
symbols
$$
[\,f\,]_{H(\la)}\,, \quad \textrm{and} \quad \|\,f\,\|_{H(\la)}\,.
$$
The meaning of $\,\|\,f\,\|_{H(k,\,\la)}\,$ is obvious. Further,
$\,C^{\infty}(\Ov)\,$ denotes the set of all restrictions to $\Ov\,$
of indefinitely differentiable functions in $\R^n\,$. Norms in
function spaces, whose elements are vector fields, are defined in
the usual way by means of the corresponding norms of the
components.\par%
The symbol $c\,$ denotes generical positive constants depending at
most on $\,n\,$ and $\Om\,.$ The symbol $C\,$ denotes positive
constants depending at most on $\,n\,,$ $\Om\,,$ and $\,\al\,,$ see
below. We may use the same symbol to denote different constants.
$\,X\Subset\,Y\,$ means that the immersion is compact.

\vspace{0.2cm}

For an arbitrary, stationary or evolution, boundary value problem we
say that solutions are \emph{classical} if all derivatives appearing
in the equations and boundary conditions are continuous up to the
boundary on their domain of definition. For brevity, the problem of
looking for minimal conditions on the data, which still lead to
classical solutions, is called by us "the minimal assumptions
problem". In reference \cite{BVJDE} we considered this problem for
$2-D$ Euler equations in bounded domains. Since the study of this
classical problem leads to considering the auxiliary problem
\begin{equation}
\left\{
\begin{array}{l}
-\,\Delta\, u=\,f \quad \textrm{in} \quad \Om \,,\\
u=\,0 \quad \textrm{on} \quad \Ga\,, %
\end{array}
\right.%
\label{lapsim}
\end{equation}
it follows that the "minimal assumptions problem" for the Euler
equations led to the corresponding problem for \eqref{lapsim}. As
already remarked in section \ref{motives}, a H\"older continuity
assumption on $\,f\,$ is unnecessarily restrictive. On the other
hand, continuity of $\,f\,$ is not sufficient. This situation led us
to introduce in reference \cite{BVJDE} an "intermediate" Banach
space $\,\,C_*(\Ov)\,,$ see section \ref{introdocas} below,
$$
C^{0,\,\la}(\Ov)\Subset \,C_*(\Ov)\Subset\,C(\Ov)\,,
$$
for which, in particular, the following result holds (Theorem 4.5,
in\cite{BVJDE}).
\begin{theorem}
Let $\,f \in \,C_*(\Ov)\,$ and let $\,u\,$ be the solution of
problem \eqref{lapsim}. Then $\,u \in\, C^2(\Ov)\,,$ moreover,
\begin{equation}
\|\,\na^2\,u\,\| \leq \,c\,\|\,f\,\|_*\,.%
\label{lapili}
\end{equation}
\label{laplaces}
\end{theorem}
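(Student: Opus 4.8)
The plan is to reduce problem \eqref{lapsim} to the study of the Newtonian (volume) potential of $\,f\,$, where the whole difficulty is concentrated in the behaviour of the second order derivatives. Let $\,E\,$ denote the fundamental solution of $\,-\De\,$ in $\,\R^n\,$, and set
$$
w(x)=\,\int_\Om E(x-y)\,f(y)\,dy\,,
$$
the volume potential of $\,f\,$ (with $\,f\,$ extended by zero outside $\,\Om\,$, or treated locally by means of a $\,C^\infty\,$ cut-off). Since $\,-\De\,w=\,f\,$ in $\,\Om\,$, the difference $\,h:=\,u-\,w\,$ is harmonic in $\,\Om\,$ and satisfies $\,h=\,-w\,$ on $\,\Ga\,$, because $\,u=\,0\,$ there. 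Accordingly I would treat the two contributions separately: the interior singular-integral analysis of $\,\na^2\,w\,$, and the boundary regularity of the harmonic corrector $\,h\,$.

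First I would establish that $\,\na^2\,w\,$ extends continuously to $\,\Ov\,$, with $\,\|\,\na^2\,w\,\|\leq\,c\,\|\,f\,\|_*\,$. Formal differentiation gives, for $\,x\in\,\Om\,$,
$$
\pa_i\pa_j\,w(x)=\,\lim_{\ep\to\,0}\int_{\Om\setminus\,B_\ep(x)}\pa_i\pa_j E(x-y)\,f(y)\,dy\,+\,c_{ij}\,f(x)\,,
$$
where $\,c_{ij}\,$ is the usual constant arising from integration over the unit sphere. The kernel $\,\pa_i\pa_j E(x-y)\,$ is of Calder\'on--Zygmund type, of size $\,|x-y|^{-n}\,$ and of mean zero on spheres, hence only conditionally integrable; mere continuity of $\,f\,$ does not suffice. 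The decisive step is to exploit the cancellation obtained by writing $\,f(y)=\,(f(y)-f(x))+\,f(x)\,$: the singular part is tamed by the increment $\,f(y)-f(x)\,$, whose size is governed by the modulus of continuity of $\,f\,$, while the term carrying $\,f(x)\,$ integrates to a boundary contribution handled by the divergence theorem. The estimate, together with the uniform continuity of $\,\na^2\,w\,$ up to $\,\Ga\,$, then follows from the integrability against the weight $\,r^{-1}\,dr\,$ of the modulus of continuity of $\,f\,$, a Dini / H-log type property built into the definition of $\,\|\cdot\|_*\,$. This is the part I expect to be the main obstacle, since controlling $\,\na^2\,w(x)-\na^2\,w(x')\,$ requires splitting the domain of integration into a ball about the two points and its complement, estimating the near field through the $\,C_*\,$ modulus and the far field through the smoothness of the kernel away from the diagonal, all uniformly as $\,x,\,x'\,$ approach $\,\Ga\,$.

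It remains to handle the harmonic corrector $\,h\,$. Since $\,\Ga\,$ is of class $\,C^{2,\,\la}\,$ and, by the previous step, the boundary datum $\,-w\,$ restricted to $\,\Ga\,$ inherits $\,C^2\,$ regularity, the classical boundary estimates for harmonic functions on $\,C^{2,\,\la}\,$ domains yield $\,h\in\,C^2(\Ov)\,$ together with a bound on $\,\na^2\,h\,$ in terms of the norm of its boundary data, hence ultimately by $\,\|\,f\,\|_*\,$. Combining the two contributions gives $\,u=\,w+\,h\in\,C^2(\Ov)\,$ and the asserted inequality \eqref{lapili}. A convenient technical reduction, which I would carry out first, is to localize by a partition of unity and to flatten $\,\Ga\,$ through the $\,C^{2,\,\la}\,$ charts, so that the boundary analysis reduces to the half-space, where the corrector can be produced explicitly by reflection and the kernel estimates take their simplest form.
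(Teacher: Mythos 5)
A preliminary remark: the paper does not actually prove Theorem \ref{laplaces} in the text. It imports it from \cite{BVJDE} (Theorem 4.5 there), states that the proof ``depends only on the behavior of the related Green's functions'', and refers to \cite{BVSTOKES} for the complete argument in the analogous Stokes case. So there is no in-paper proof to compare against line by line; I can only judge your proposal on its own merits and against the method the author describes. Your treatment of the volume potential $w$ is the right idea and is the standard one: the principal-value formula for $\pa_i\pa_j w$, the cancellation obtained from $f(y)-f(x)$, and the integration of $\om_f(r)$ against $r^{-1}\,dr$ give exactly $\|\na^2 w\|\le c\,\|f\|_*$; this is where the Dini condition enters and this part is sound.

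The genuine gap is the harmonic corrector $h$. You argue that, since $w\in C^2(\Ov)$, the boundary datum $-w|_\Ga$ is $C^2(\Ga)$, and that ``classical boundary estimates for harmonic functions on $C^{2,\la}$ domains yield $h\in C^2(\Ov)$'' with a corresponding bound. This is false: the Dirichlet solution operator is \emph{not} bounded from $C^2(\Ga)$ to $C^2(\Ov)$ --- this is the same endpoint failure of the Schauder estimates that makes $f\in C(\Ov)$ insufficient in the interior. Concretely, in the half-space model the mixed derivative $\pa_n\pa_{tan}h$ on the boundary is essentially a first-order nonlocal operator (a Riesz/Hilbert transform composed with $\na_{tan}$, i.e.\ $(-\De_{tan})^{1/2}\na_{tan}$) applied to the boundary datum, and such operators do not map $C^2(\Ga)$ into $C^0(\Ga)$; one can produce $g\in C^2$ whose harmonic extension has unbounded mixed second derivatives near $\Ga$. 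Nor can you upgrade the datum: $\na^2 w|_\Ga$ is the image of a Dini-continuous $f$ under a singular integral, and singular integrals do not preserve the Dini class (take $\om_f(t)=(\log 1/t)^{-2}$: the output modulus is of order $(\log 1/\de)^{-1}$, which is no longer Dini), so you cannot claim $-w|_\Ga$ is ``Dini--$C^2$'' and iterate. The repair is to avoid a corrector with rough boundary data altogether: either work directly with the Green's function $G$ of $\Om$, using $|\na_x^2 G(x,y)|\le c\,|x-y|^{-n}$ together with the cancellation $\int_\Om \na_x^2G(x,y)\,dy=\na^2 v(x)$ with $-\De v=1$, $v|_\Ga=0$ smooth --- which is precisely the route the author indicates and which treats interior and boundary uniformly --- or, after flattening $\Ga$, build the corrector explicitly by odd reflection, i.e.\ as the potential with kernel $E(x-y^*)$, and run on it the very same Dini estimate you used for $w$ (this works because $|x-y^*|\ge |x-y|$ for $x,y$ in the same half-space). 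Your closing sentence gestures at the reflection, but the argument as written rests on the unbounded $C^2(\Ga)\to C^2(\Ov)$ step and does not close.
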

Contrary to H\"older's continuity case, where full regularity is
restored in the sense described in section \ref{motives}, no
significant additional regularity is obtained above, besides
continuity of $\,\na^2\,u\,.$ This set-up led us to look for
intermediate, significant, situations. We define a new family of
Banach spaces, denoted $\, D^{0,\,\al}(\Ov)\,,$ $\,\al>\,0\,,$ and
called H-log spaces, for which, the following holds. For
$\,\al>\,1>\,\bt>\,0\,$ and $\,0<\,\la\leq\,1\,,$ one has
$$
C^{0,\,\la}(\Ov)\Subset D^{0,\,\al}(\Ov) \Subset \,C_*(\Ov) \Subset
D^{0,\,\beta}(\Ov)\Subset\,C(\Ov)\,,
$$
with compact embedding. These new spaces present the following
"intermediate" regularity behavior. Consider the simplest case of
constant coefficients, second order, elliptic operators
\begin{equation}
\bL=\,\sum_1^n a_{i\,j} \pa_i\,\pa_j\,.%
\label{elle}
\end{equation}
Without loos of generality, we assume that the matrix is symmetric.
Below, we sketch a proof of the following result. A real proof is
shown merely for interior regularity, see section \ref{elipse}.
Optimality of the result is shown at the end of section \ref{one}.
\begin{theorem}
Let $\,f\in\, D^{0,\,\al}(\Ov)\,$ for some $\,\al>\,1\,,$ and let
$\,u\,$ be the solution of problem
\begin{equation}
\left\{
\begin{array}{l}
\bL\,u=\,f \quad \textrm{in} \quad \Om \,,\\
u=\,0 \quad \textrm{on} \quad \Ga\,. %
\end{array}
\right.%
\label{lapnao}
\end{equation}
Then $\,u \in\,
 D^{2,\,\al-1}(\Ov)\,,$ moreover
\begin{equation}
\|\,\na^2\,u\,\|_{\,\al-1} \leq \,C'\,\|\,f\,\|_\al\,.%
\label{hahega}
\end{equation}
The above result is optimal. If $\,\beta >\,\al-1\,,$ then
$\,\na^2\,u \in\, D^{0,\,\beta}(\Ov)\,,$ is false in general.
\label{laplolas}
\end{theorem}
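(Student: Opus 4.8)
The plan is to reduce \eqref{hahega} to a sharp singular-integral estimate for the Newtonian potential, exploiting the fact that on the logarithmic scale $\m_\al(r)=(\log(1/r))^{-\al}$ underlying the spaces $D^{0,\al}(\Ov)$, inversion of $\bL$ costs exactly one log-unit. First I would normalize the operator: since the symmetric matrix $(a_{ij})$ is positive definite, the affine change of variables $x=A^{1/2}y$ turns $\bL$ into the Laplacian, and being linear it preserves the H-log scale up to a fixed constant absorbed in $C'$. Hence it suffices to treat $\bL=\De$. The sup bound on $\na^2 u$ alone already follows from Theorem \ref{laplaces} together with the embedding $D^{0,\al}(\Ov)\Subset C_*(\Ov)$, so the real content of \eqref{hahega} is the $D^{0,\al-1}$ seminorm of $\na^2 u$. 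I would write $u=v+w$, where $v$ is the Newtonian potential of a compactly supported extension $\cf$ of $f$ with $\|\cf\|_\al\le c\,\|f\|_\al$, and $w$ is harmonic, hence smooth in the interior; all the difficulty then sits in $\na^2 v$.

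The core step is the modulus estimate for $\na^2 v$. One has $\pa_i\pa_j v(x)=\mathrm{p.v.}\int K_{ij}(x-y)\,\cf(y)\,dy+c_{ij}\,\cf(x)$, where $K_{ij}$ is a Calder\'on--Zygmund kernel, homogeneous of degree $-n$ with vanishing mean on spheres. Fixing $x,x'$ with $|x-x'|=\de$ and splitting the integral over $B_{2\de}(x)$ and its complement, the cancellation of $K_{ij}$ together with $|\cf(y)-\cf(x)|\le[\cf]_\al\,\m_\al(|y-x|)$ yields the classical bound
\[
|\na^2 v(x)-\na^2 v(x')|\le c\,[\cf]_\al\left(\int_0^{2\de}\frac{\m_\al(r)}{r}\,dr+\de\int_{2\de}^{d}\frac{\m_\al(r)}{r^2}\,dr\right),
\]
with $d=\operatorname{diam}\Om$. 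This is exactly where $\al>1$ enters: the substitution $s=\log(1/r)$ gives $\int_0^{2\de}\m_\al(r)/r\,dr=(\al-1)^{-1}\m_{\al-1}(2\de)$, convergent precisely for $\al>1$, while the second term is of order $\m_\al(\de)=o(\m_{\al-1}(\de))$. Hence $[\na^2 v]_{\al-1}\le c\,[\cf]_\al\le C'\,\|f\|_\al$, which is \eqref{hahega}.

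For the statement up to the boundary I would flatten $\Ga$ locally by a $C^{2,\la}$ diffeomorphism and replace $v$ by the half-space Green potential; the flattening produces a variable-coefficient perturbation of $\De$ and a double-layer term, but both carry kernels of the same Calder\'on--Zygmund type, so the identical near/far splitting applies and the perturbative errors are harmless, since the $C^{2,\la}$ regularity of $\Ga$ controls them comfortably on the log scale. I expect this boundary bookkeeping, rather than any conceptual point, to be the main obstacle, which is presumably why only the interior case is carried out in full in section \ref{elipse}. Finally, optimality follows by running the same computation in reverse on an extremal datum whose modulus is exactly $\m_\al$: since $\int_0^{\de}\m_\al(r)/r\,dr\sim\m_{\al-1}(\de)$ is a two-sided estimate, $\na^2 u$ has modulus bounded below by $c\,\m_{\al-1}(\de)$, which cannot lie in any $D^{0,\bt}(\Ov)$ with $\bt>\al-1$, because $\m_{\al-1}(r)/\m_{\bt}(r)=(\log(1/r))^{\bt-(\al-1)}\to\infty$ as $r\to0$.
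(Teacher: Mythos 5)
Your main estimate follows essentially the same route as the paper: the entire content is a Korn--Lichtenstein-type bound for the singular integral $\mK\ast\phi$, obtained by the near/far splitting around $|x_0-x_1|=\de$, the cancellation $\int_S\sg\,dS=0$ on the near part, the computation $\int(-\log r)^{-\al}\,dr/r=(\al-1)^{-1}(-\log r)^{1-\al}$ (which is exactly where $\al>1$ enters), and the observation that the far-field gradient term contributes only $O((-\log\de)^{-\al})$, one log-power better than needed. This is precisely Theorem \ref{ofundas} and its proof. The packaging differs in two harmless ways: you reduce $\bL$ to $\De$ by the affine substitution $x=A^{1/2}y$ and split $u$ into the Newtonian potential of an extended datum plus a harmonic remainder, whereas the paper keeps $\bL$, uses its explicit fundamental solution \eqref{janota}, and localizes via a cutoff $\ph=\z\,u$ together with the identity $\ph=\bS\bL\ph$; the cutoff route is what produces the interior estimate \eqref{acasis-dois} with the lower-order term $\|u\|_{1,\al}$ and is also the form that survives the odd reflection at a flattened boundary. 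For the interior statement your decomposition is fine, and your remark that continuity of $\na^2u$ alone already follows from $D^{0,\al}(\Ov)\Subset C_*(\Ov)$ matches the paper.

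The gap is in your optimality argument. You cannot ``run the computation in reverse'' on a generic datum whose modulus of continuity is two-sidedly comparable to $(-\log r)^{-\al}$: the kernels $K_{ij}$ have vanishing mean on spheres, so the near-field integral $\int_{|y-x|<2\de}K_{ij}(x-y)\bigl(f(y)-f(x)\bigr)\,dy$ is subject to cancellation, and a lower bound on $\om_f$ yields only an \emph{upper} bound on $|\na^2u(x)-\na^2u(x')|$, never a lower one; for data whose angular structure is mismatched to $K_{ij}$ the second derivatives can be strictly better than $(-\log\de)^{-(\al-1)}$. A concrete example is required, and the paper supplies one by prescribing the \emph{solution} rather than the datum: $u(x)=(-\log|x|)^{-\al}\sum_{i\neq j}x_ix_j$. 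Direct computation shows that the pure second derivatives, hence $\De u$, involve only powers $(-\log|x|)^{-(\al+1)}$ and lower, so $f=\De u\in D^{0,\al+1}(\Ov)$, while the mixed derivatives retain a genuine $(-\log|x|)^{-\al}$ term, so $\pa_i\pa_ju\notin D^{0,\beta}(\Ov)$ for any $\beta>\al$; multiplying by a cutoff transfers this to the homogeneous boundary value problem. Your sketch needs to be replaced by, or completed into, such a construction.
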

The constant $\,C'\,$ depends on $\,n\,,$ $\Om\,,$ $\,\al\,$, on the
ellipticity constants of $\,\bL\,,$ and on $\,|\|\,\sg\,|\|\,$
(defined in the sequel).\par%
Since  $\,D^{0,\,\al}(\Ov) \subset\, C_*(\Ov)\,,$ for $\,\al
>\,1\,,$ it follows that (independently of the result claimed in the above theorem)
the solution $\,u\,$ of problem \eqref{lapnao} belongs to $\,C^2(\Ov)\,.$\par%

\vspace{0.2cm}

We adapt here the argument developed in the classical treatise
\cite{JBS} to prove the classical Schauder estimates, in the
framework of H\"older spaces, by appealing to the
H\"older-Korn-Lichtenstein-Giraud inequality (see \cite{JBS}, part
II, section 5.3). Following \cite{JBS}, we consider \emph{singular
kernels} $\,\mK(x)$ of the form
\begin{equation}
\mK(x)=\,\frac{\sg(x)}{|x|^n}\,,%
\label{kapes}
\end{equation}
where $\,\sg(x)\,$ is infinitely differentiable for $\,x\neq\,0\,$,
and satisfies the properties $\,\sg(t\,x)=\,\sg(x)\,,$ for
$\,t>0\,,$ and
$$
\int_S \sg(x) \,dS =\,0\,,
$$
where $\,S=\,\{\,{x:\,|x|=1}\,\}\,$. It follows easily that, for
$\,0<\,R_1<\,R_2\,,$%
\begin{equation}
\int_{R_1 <|x|< R_2} \mK(x) \,dx =\,\int_{R_1 <|x|} \mK(x) \,dx=
\,\int  \mK(x) \,dx=\,0\,,%
\label{simp}
\end{equation}
where the last integral is in the Cauchy principal value sense.\par%
For continuous functions $\,\phi\,$ with compact support, the
convolution integral, extended to the whole space $\,\R^n$,
\begin{equation}
(\mK \ast \phi)(x)=\,\int \,\mK(x-y)\,\phi(y)\,dy%
\label{convint}
\end{equation}
exists finite, as a Cauchy principal value.\par%
Set $\,I(R)=\{\,x:\,|\,x\,| \leq\, R\,\}\,,$
$\,D^{0,\,\al}(R)=\,D^{0,\,\al}(I(R))\,,$ and similar for norms and
semi-norms labeled by $R\,$. The following theorem is the H-log
counterpart of the classical result concerning H\"older spaces,
sometimes called H\"older-Korn-Lichtenstein-Giraud inequality, see
\cite{JBS}.  We show $\,D^{0,\,\al-\,1}(\Ov)\,$ regularity for
second order derivatives of potentials generated by a
$\,D^{0,\,\al}(\Ov)\,$ charge. The following result will be proved
in section \ref{apotes} below.
\begin{theorem}
Let $\,\mK(x)$ be a singular kernel, enjoying the properties
described above. Further, let $\phi \in \,D^{0,\,\al}(R)\,,$ for
some $\,\al>\,1\,,$ vanish for $\,|x| \geq\,R\,.$ Then $\,\mK \ast
\phi \in\, D^{0,\,\al-\,1}(R)\,,$ moreover
\begin{equation}
\|\,(\mK \ast \phi)\,\|_{(\al-\,1),\,R}\leq\,C\,\|\,
\phi\,\|_{\al,\,R}\,,%
\label{tima}
\end{equation}
$ C=\,c(\al)(\al-\,1)^{-\,1}\,(\|\sg\|+\,\|\,\na\,\sg\,\|\,)\,.$%
\label{ofundas}
\end{theorem}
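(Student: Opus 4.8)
The plan is to transcribe the classical proof of the H\"older-Korn-Lichtenstein-Giraud inequality from \cite{JBS}, replacing the H\"older modulus $t^\la$ by the logarithmic modulus $\om_\al(t)\sim(\log(1/t))^{-\al}$ that defines the seminorm of $D^{0,\al}$, and tracking how the logarithmic weights transform. The single elementary fact that drives everything is the integral estimate
\begin{equation}
\int_0^r \frac{\om_\al(t)}{t}\,dt \leq \frac{c}{\al-1}\,\om_{\al-1}(r), \qquad \al>1,
\label{plan-engine}
\end{equation}
which is just $\int_{\log(1/r)}^\infty s^{-\al}\,ds=(\al-1)^{-1}(\log(1/r))^{-(\al-1)}$ after the substitution $s=\log(1/t)$. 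This is exactly the place where the hypothesis $\al>1$ is needed (the integral diverges at $\al=1$), where the loss of one unit in the exponent is produced, and where the factor $(\al-1)^{-1}$ in the constant $C$ originates.

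Writing $u=\mK\ast\phi$, I would first bound $\|u\|$. Splitting the defining principal-value integral into $|x-y|<1$ and $|x-y|\geq 1$, subtracting the constant $\phi(x)$ on the inner region (legitimate by the cancellation \eqref{simp}), and using $|\phi(y)-\phi(x)|\leq[\phi]_\al\,\om_\al(|x-y|)$ together with $|\mK|\leq\|\sg\|\,|x-y|^{-n}$, the inner part is controlled by $\|\sg\|\,[\phi]_\al\int_0^1\om_\al(t)t^{-1}\,dt$, finite by \eqref{plan-engine}; the outer part is bounded using the compact support of $\phi$ and the size of $\mK$. Hence $\|u\|\leq C\|\phi\|_\al$.

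The core is the H-log seminorm. Fix $x_1,x_2$, set $\de=|x_1-x_2|$, and write
\begin{equation}
u(x_1)-u(x_2)=\int[\mK(x_1-y)-\mK(x_2-y)]\,\phi(y)\,dy.
\label{plan-diff}
\end{equation}
I would split the $y$-domain into the near ball $|y-x_1|\leq 2\de$ and its complement. On the near ball I use the cancellation \eqref{simp} to replace $\phi(y)$ by $\phi(y)-\phi(x_1)$, bound each kernel by $\|\sg\|\,|x_i-y|^{-n}$, and use the log-modulus of $\phi$; by \eqref{plan-engine} this contributes $\leq c\,\|\sg\|\,(\al-1)^{-1}[\phi]_\al\,\om_{\al-1}(\de)$, the term that fixes the output exponent. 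On the complement I again subtract the constant $\phi(x_1)$ and use the mean-value bound $|\mK(x_1-y)-\mK(x_2-y)|\leq c\,\|\na\sg\|\,\de\,|x_1-y|^{-n-1}$, valid because the segment $[x_1,x_2]$ stays away from the singularity when $|x_1-y|>2\de$; integrating $\de\,|x_1-y|^{-n-1}\om_\al(|x_1-y|)$ over $2\de<|x_1-y|\lesssim R$ yields a quantity dominated by $\om_\al(\de)\leq\om_{\al-1}(\de)$, so the far part is absorbed into the near one. Collecting the two parts gives \eqref{tima} with $C=c(\al)(\al-1)^{-1}(\|\sg\|+\|\na\sg\|)$.

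The main obstacle is not the size estimates but the careful handling of the cancellation and the principal value: the identity $\int\mK\,dy=0$ holds only in the Cauchy principal-value sense and over full annuli, so subtracting the constants $\phi(x_i)$ on a truncated or off-centre region produces leftover contributions from the edge of $\{|y|\leq R\}$ and from the mismatch between the balls centred at $x_1$ and at $x_2$. Controlling these residual annular pieces with $\|\phi\|$ and the decay of $\mK$, and verifying that the far-field substitution genuinely gains the extra power $|x_1-y|^{-1}$ uniformly, is the delicate bookkeeping; everything else reduces to \eqref{plan-engine}.
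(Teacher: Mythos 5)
Your proposal follows essentially the same route as the paper's own proof in Section~\ref{apotes}: the same use of the cancellation \eqref{simp} to subtract $\phi(x_i)$, the same splitting at radius $2\de$ around the evaluation points, the same mean-value bound $|\mK(x_0-y)-\mK(x_1-y)|\lesssim(\|\sg\|+\|\na\sg\|)\,\de\,|y-x_0|^{-n-1}$ on the far part, and the same logarithmic integral (the paper's \eqref{simvales}, your ``engine'') as the sole source of the unit loss in the exponent and of the factor $(\al-1)^{-1}$. The only cosmetic differences are that the paper splits your far region once more at $|y-x_0|=\frac12$, using only $\|\phi\|$ beyond that radius where the log-modulus is unusable, and closes with a chaining argument to pass from $|x_0-x_1|\leq\de_0(\al)$ to all $|x_0-x_1|<1$; both are routine instances of the bookkeeping issues you already flag.
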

One could also apply to ideas developed in references \cite{a-d-n},
\cite{miranda}, \cite{Sol70}, and \cite{Sol71}.
\section{On the $\,C_*(\Ov)\,$ space.}\label{introdocas}
In the context of \cite{BVJDE}, the Theorem \ref{laplaces} was
marginal. So, we did not published the proof, even though we were
not able to find it in the current literature, for boundary value
problems. At that time, we have proved the result for elliptic
boundary value problems, like \eqref{lapnao}. Our proof depend only
on the behavior of the related Green's functions, hence it applies
to larger classes of elliptic boundary value problems. Recently, by
following the same ideas, we have proved the result for the Stokes
system (see the Theorem 1.1 in \cite{BVSTOKES}):
\begin{theorem}
For every $\,\ff \in \,\bC_*(\Ov)\,$ the solution $(\bu,\,p)$ to the
Stokes system
\begin{equation}
\left\{
\begin{array}{l}
-\,\De\,\bu+\,\na\,p=\,\ff \quad \textrm{in} \quad \Om \,,\\
\na \cdot\,\bu=\,0  \quad \textrm{in} \quad \Om \,,\\
\bu=\,0 \quad \textrm{on} \quad \Ga%
\end{array}
\right.%
\label{doistokes}
\end{equation}
belongs to $\,\bC^2(\Ov)\times\,C^1(\Ov)\,$. Moreover, there is a
constant $\,c_0$, depending only on $\,\Om\,,$ such that the
estimate
\begin{equation}
\|\,\bu\,\|_2 +\,\|\na \,p\,\| \leq \,c_0\,\|\,\ff\,\|_*\,, \quad
\forall \, \ff\in\,\bC_*(\Ov)\,,%
\label{trespum}
\end{equation}
holds.%
\label{teoum-st}
\end{theorem}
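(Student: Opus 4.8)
The plan is to reduce the statement to a singular-integral mapping estimate in the $\,C_*\,$ scale, exactly as Theorem~\ref{laplaces} does for the scalar Laplacian, the only genuinely new ingredient being the verification that the Green tensor of the Stokes operator has the required kernel structure. First I would represent the solution through the Green tensor of problem~\eqref{doistokes} on $\,\Om\,.$ For a domain with boundary of class $\,C^{2,\,\la}\,$ there exist a velocity Green tensor $\,\bG(x,y)=(G_{ij}(x,y))\,$ and an associated pressure kernel $\,\bg(x,y)=(g_j(x,y))\,$ such that
\begin{equation}
\bu(x)=\int_\Om \bG(x,y)\,\ff(y)\,dy\,,\qquad p(x)=\int_\Om \bg(x,y)\cdot\,\ff(y)\,dy\,.
\label{greenrep}
\end{equation}
Near the diagonal $\,\bG\,$ agrees, up to a smooth correction, with the Stokeslet $\,\Pf_{ij}(x-y)\,,$ the fundamental solution of the whole-space Stokes operator, while $\,\bg\,$ agrees up to a smooth correction with the gradient $\,q_j(x-y)\,$ of the Newtonian potential; the residual contributions are governed by the boundary corrector of the tensor, which is smooth in the interior of $\,\Om\,.$

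Next I would differentiate \eqref{greenrep}. Two derivatives of $\,\bu\,$ and one derivative of $\,p\,$ produce kernels that, near the diagonal, are homogeneous of degree $\,-n\,$ in $\,x-y\,$ and have zero average over the unit sphere, i.e.\ Calder\'on--Zygmund kernels of the type $\,\mK(x)=\sg(x)/|x|^n\,$ considered here, plus remainder kernels that are only weakly singular or bounded. Concretely, $\,\pa_k\,\pa_l\,u_i\,$ and $\,\pa_k\,p\,$ split into a principal singular integral against $\,\ff\,,$ coming respectively from $\,\pa_k\,\pa_l\,\Pf_{ij}\,$ and $\,\pa_k\,q_j\,,$ plus integrals with integrable kernels arising from the smooth corrections and from the boundary corrector. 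The integrable-kernel pieces map $\,C(\Ov)\,,$ a fortiori $\,\bC_*(\Ov)\,,$ continuously into $\,C(\Ov)\,$ (indeed into $\,C^1(\Ov)\,$), and therefore contribute only a term bounded by $\,c\,\|\,\ff\,\|\,.$

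For the genuinely singular pieces I would argue as in the Schauder theory: localize by a finite partition of unity, and in each interior patch freeze the kernel at the base point, reducing matters to a whole-space convolution $\,\mK\ast\,\ff\,$ with a fixed Calder\'on--Zygmund kernel. Continuity of $\,\na^2\,\bu\,$ and $\,\na\,p\,,$ together with the bound by $\,\|\,\ff\,\|_*\,,$ then follows from the singular-integral estimate in the $\,C_*\,$ scale that underlies Theorem~\ref{laplaces}, namely the boundedness of $\,\mK\ast\,\cdot\,$ from $\,C_*\,$ into $\,C\,.$ For the boundary patches I would flatten $\,\Ga\,$ and appeal to the explicit half-space Stokes Green tensor (the reflection/image solution), which, after the change of variables, again decomposes into a convolution-type singular part, handled by the same estimate, plus regular remainders.

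The main difficulty I expect lies in the boundary analysis and in the coupling enforced by incompressibility. Two points require care. First, one must check the mean-zero condition $\,\int_S \sg\,dS=\,0\,$ that legitimizes the principal value and the $\,C_*\,$ estimate: for $\,\pa_k\,\pa_l\,\Pf_{ij}\,$ and for $\,\pa_k\,q_j\,$ this is a direct computation using homogeneity and the harmonicity of the Newtonian potential, but one must verify that the boundary corrector does not spoil it. Second, since $\,\bG\,$ and $\,\bg\,$ are not translation invariant, the freezing step generates commutator-type remainders whose kernels must be shown to be only weakly singular; controlling these uniformly up to $\,\Ga\,,$ and in particular controlling $\,\na\,p\,,$ which is determined only modulo the divergence-free constraint, is the delicate part. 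Once these kernel estimates are secured, assembling them into \eqref{trespum} is routine, and the constant $\,c_0\,$ depends only on $\,\Om\,$ through the geometry of $\,\Ga\,$ and the associated Green-tensor bounds.
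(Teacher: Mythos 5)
You should first note what the paper's ``own proof'' of Theorem~\ref{teoum-st} actually is: the theorem is quoted from \cite{BVSTOKES} (Theorem 1.1 there), and the surrounding text states the method --- one works \emph{directly} with the Green tensor of the boundary value problem itself, the whole proof ``depend[ing] only on the behavior of the related Green's functions.'' Concretely, that route rests on Solonnikov-type global pointwise estimates for the Stokes Green matrix \cite{Sol70,Sol71}, of the form $|\na_x^2\,\bG(x,y)|\leq c\,|x-y|^{-n}$ \emph{uniformly up to} $\Ga$, together with a uniform bound on integrals of $\na_x^2\,\bG$ over annular shells, which plays the role that the cancellation \eqref{simp} plays for convolution kernels; one then splits $\int \na_x^2\,\bG(x,y)\,\big(\ff(y)-\ff(x)\big)\,dy$ into $|y-x|\leq\de$ and $|y-x|>\de$, the near part being controlled by the Dini integral $[\,\ff\,]_*$, and concludes by density of $C^{\infty}(\Ov)$ in $C_*(\Ov)$. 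There is no localization, no freezing, no flattening, no reflection. Your starting point (the Green tensor representation) coincides with the paper's, but you then graft on the Schauder localization machinery of \cite{JBS}, and this is where genuine gaps appear.

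Two steps of your outline would fail as stated. First, your splitting ``Stokeslet plus smooth corrector, corrector smooth in the interior'' disposes only of interior regularity: the corrector $\bG-\Pf$ is smooth on compact subsets of $\Om$, but its second derivatives blow up like $|x-y|^{-n}$ as $x$ and $y$ approach a common boundary point, so in boundary patches it is \emph{not} an integrable remainder --- and the theorem asserts $\bu\in\bC^2(\Ov)$, continuity up to $\Ga$, so the boundary patches carry all the difficulty. Second, the reflection device you invoke is specific to scalar equations (it is the one the paper uses in section \ref{elipse} for the operator $\bL$): odd reflection across a flat boundary does not map solutions of the Stokes system to solutions, the correct half-space kernel being the Blake-type image system with higher-order singularities whose $\mK=\sg(x)/|x|^n$ structure and mean-zero property you would still have to verify; moreover, after flattening a curved piece of $\Ga$ the divergence constraint transforms, so the frozen-coefficient commutator terms couple $\na^2\bu$ and $\na p$ and their absorption would need interpolation inequalities in the $C_*$ scale that are nowhere established. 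Finally, you never pass from smooth data to general $\ff\in\bC_*(\Ov)$; the estimate \eqref{trespum} must first be proved for regular $\ff$ and then extended by the density of $C^{\infty}(\Ov)$ in $C_*(\Ov)$ so that $\na^2\bu$ and $\na p$ are obtained as uniform limits. The paper's Green-matrix route bypasses exactly the obstructions listed above, which is why it transfers unchanged from the scalar elliptic case of Theorem~\ref{laplaces} to the Stokes system.
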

It is worth noting that similar results hold for elliptic equations
with variable, sufficiently smooth, coefficients. In fact proofs
depend only on suitable properties of the Green's functions related
to the particular problem.%

\vspace{0.2cm}

For the readers convenience we recall definition and main properties
of $\,C_*(\Ov)\,$.%
For $\,f \in \,C(\Ov)\,,$ and for each $\,r>\,0\,,$ define
$\,\om_f(r) \,$ as in equation \eqref{cinco} below. In \cite{BVJDE}
we have introduced the semi-norm
\begin{equation}%
[\,f\,]_* =\,[\,f\,]_{*,\,\de} \equiv \int_0^\de \,\om_f(r) \,\frac{dr}{r}\,.%
\label{seis}
\end{equation}
The finiteness of the above integral is known as \emph{Dini's
continuity condition}, see \cite{gilbarg}, equation (4.47). The
space $\,C_*(\Ov)\,$ is defined as follows.
\begin{equation}
C_*(\Ov) \equiv\,\{\,f \in\,C(\,\Ov): \,[\,f\,]_*
<\,\infty\,\}\,.%
\label{cstar}
\end{equation}
A norm is introduced in $\,C_*(\Ov)\,$ by setting $
\,\|\,f\,\|_{*,\,\de}\equiv\,[\,f\,]_{*,\,\de}+\,\|\,f\,\|\,.$ The
following are some of the main properties of $\,C_*(\Ov)\,$ (for
complete proofs see \cite{BVSTOKES}):\par%
-- $\,C_*(\Ov)\,$ is a Banach space.\par%
-- The embedding $\, C_*(\Ov) \subset \,C(\Ov)\,$ is compact.\par%
-- The set $\,C^{\infty}(\Ov)\,$ is dense in $\,C_*(\Ov)\,.$\par%
\begin{remark}
\rm{ The results obtained in the framework of $\,C_*(\Ov)\,$ spaces
led us to also consider the problem of their possible extension to
\emph{larger} functional spaces of continuous functions. This was
done, at least partially, in references \cite{BV-LMS} and
\cite{BVALBPAO}, to which the interested reader is referred.}
\end{remark}
\section{$\,D^{0,\,\al}(\Ov)\,$ spaces: Definition and properties.}\label{one}%
The above setup led us to define and study a family of Banach spaces
$\, D^{0,\,\al}(\Ov)\,,$ $\,\al>\,0\,,$ significant in our context.
We call these spaces H-log spaces. They enjoy typical properties of
functional spaces, suitable in PDEs theory.\par%
We set%
\begin{equation}
\om_f(r) \equiv \, \sup_{\,x,\,y
\in\,\Om \,;\, 0<\,|x-\,y| \leq\,r } \,|\,f(x)-\,f(y)\,|\,.%
\label{cinco}
\end{equation}
In the sequel $\,0<\,r<\,1\,.$ For $\,\al >\,0\,,$ $\,\al
\neq\,1\,,$ one has
\begin{equation}
\int \,\frac{(-\log{r})^{-\,\al}}{r} \,dr =\,\frac{1}{\al-\,1}\,(-\log{r})^{1-\,\al}\,.%
\label{simvales}
\end{equation}
For $\,\al=\,1\,$ the right hand side of the above equation should
be replaced by $\,-\log\,(-\log{r})\,.$ It follows, in particular,
that the $\,C_*(\Ov)\,$ semi-norm \eqref{seis} is finite if
\begin{equation}
\om_f(r)\leq\, Const.\,(-\log{r})^{-\,\al}\,,%
\label{alfas}
\end{equation}
for some $\,\al >\,1\,.$ This led us to define, for each fixed
$\,\al>\,0\,,$ the semi-norm
\begin{equation}
[\,f\,]_{\al} \equiv\,\sup_{x,\,y \in\,\Ov \quad
0<\,|x-\,y|<\,1}\,\frac{|f(x)-\,f(y)\,|}{(-\log{|\,x-\,y|})^{-\,\al}}=\,
\sup_{\,r \in (0,\,1) }\,\frac{\om_f(r)}{(-\log{r})^{-\,\al}}
\,.%
\label{alfas2}
\end{equation}
The space $\,D^{0,\,\al}(\Ov)\,$ is defined as follows.
\begin{definition}
For each real positive $\,\al\,,$ we define
\begin{equation}
D^{0,\,\al}(\Ov) \equiv\,\{\,f \in\,C(\,\Ov): \,[\,f\,]_{\al}
<\,\infty\,\}\,.%
\label{cstar}
\end{equation}
A norm is introduced in $\,D^{0,\,\al}(\Ov)\,$ by setting
$$
\|\,f\,\|_{\al}\equiv\,[\,f\,]_{\al}+\,\|\,f\,\|\,.
$$
\label{defcstar}
\end{definition}
The reason for the assumption $\,|x-\,y|<\,1\,$ in \eqref{alfas2} is
due to the behavior of the function $\,\log{r}\,$ for $\,r
\geq\,1\,.$ This is clearly not restrictive from the conceptual point of view.\par%
Note that if we replace $\,0<\,|x-\,y|<\,1\,$ by
$\,0<\,|x-\,y|<\,\de_0\,,$ where $\,0<\,\de_0<\,1\,,$  and set
\begin{equation}
[\,f\,]_{\al;\,\de_0} \,\equiv\,\sup_{x,\,y \in\,\Ov \quad
0<\,|x-\,y|<\,\de_0}\,\frac{|f(x)-\,f(y)\,|}{(-\log{|\,x-\,y|})^{-\,\al}}
\,.%
\label{alfas27}
\end{equation}
then,
\begin{equation}
[\,f\,]_{\al;\,\de_0} \le\,[\,f\,]_{\al}
\leq\,[\,f\,]_{\al;\,\de_0}+\,\frac{2}{(-\log{\de_0})^{-\,\al}}\,\|\,f\,\|\,.
\label{clearnot}
\end{equation}
Hence, the norms $\,\|\,f\,\|_{\,\al}\,$ and
$\,\|\,f\,\|_{\al;\,\de_0}\,$are equivalent. We may also define, in
the canonical way, spaces $\,D^{k,\,\al}(\Ov)\,,$ for integers
$\,k\geq\,1\,.$ By writing \eqref{alfas2} in the form
$$
[\,f\,]_{\al}=\, \sup_{\,r \in (0,\,1)
}\,\om_f(r)\,(\log{\frac1r})^{\,\al}
$$
we realize that we have merely replaced in the definition of
H\"older spaces the quantity
$$
\frac{1}{r} \quad \textrm{ by}  \quad \log{\frac{1}{r}}\,,
$$
and allow $\,\al\,$ to be arbitrarily large. Finally, we may define
Banach spaces $\,D^{k,\,\al}(\Ov)\,$ endowed with the norm
$$
\|\,f\,\|_{k,\al} \equiv\,\|\,f\,\|_k +\,[\,\na^k\,f\,]_{\al}\,.
$$

\vspace{0.2cm}

Next we prove some properties of $\,D^{0,\,\al}\,$ spaces. Proofs
are similar to those followed for H\"older spaces (much simpler then
the corresponding proofs for $\,C_*(\Ov)\,$). For the reader's
convenience, without any claim of particular
originality, we adapt to our case some proofs, well known in a H\"older's
framework.\par%
It immediately follows from definitions that, for $\,\al>\,1\,,$ the
embedding $\,D^{0,\,\al}(\Ov) \subset\, C_*(\Ov)\,$ is continuous
(actually compact, see below). Furthermore,
$$
\,[\,f\,]_{*,\,\de_0} \leq\, \frac{1}{(\al-\,1)\,
(-\log{\de_0})^{\,\al-\,1}}\,[\,f\,]_{\al;\,\de_0}\,,%
$$
for fixed $\,0<\,\de_0<\,1\,.$\par%
\begin{theorem}
For each $\,\al>\,0\,,$ $\,D^{0,\,\al}(\Ov)\,$ is a Banach space.%
\label{probanas}
\end{theorem}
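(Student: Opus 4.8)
The plan is to follow the standard completeness argument for H\"older-type spaces, adapted to the logarithmic modulus. First I would dispose of the vector-space and norm axioms, which are routine: positive homogeneity $[\,t\,f\,]_\al=\,|t|\,[\,f\,]_\al$ for $\,t\in\,\R\,$ and the triangle inequality $[\,f+\,g\,]_\al\leq\,[\,f\,]_\al+\,[\,g\,]_\al$ follow immediately from the definition of $[\,f\,]_\al$ as a supremum of difference quotients, together with the triangle inequality for $|\cdot|\,$; positive definiteness of $\|\,\cdot\,\|_\al$ is inherited from the sup-norm summand $\|\,f\,\|\,.$ Hence $D^{0,\,\al}(\Ov)$ is a normed vector space, and the only substantial point is completeness.

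For completeness, let $\{\,f_k\,\}$ be a Cauchy sequence in $\|\,\cdot\,\|_\al\,.$ Since $\|\,f_k-\,f_m\,\|\leq\,\|\,f_k-\,f_m\,\|_\al\,,$ the sequence is Cauchy in the sup norm, and $C(\Ov)$ is complete, so there is $\,f\in\,C(\Ov)\,$ with $\,f_k\to\,f\,$ uniformly on $\,\Ov\,.$ It remains to show that $\,f\in\,D^{0,\,\al}(\Ov)\,$ and that $\,f_k\to\,f\,$ in the $\|\,\cdot\,\|_\al$ norm, i.e. that $[\,f_k-\,f\,]_\al\to\,0\,.$

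The key step is the interchange of a limit and a supremum, which I would carry out by fixing the pair $(x,\,y)$ first. Given $\,\ep>\,0\,,$ choose $\,N\,$ so that $[\,f_k-\,f_m\,]_\al<\,\ep\,$ for all $\,k,\,m\geq\,N\,.$ Fix any $\,x,\,y\in\,\Ov\,$ with $\,0<\,|x-\,y|<\,1\,$ and any $\,k\geq\,N\,$; then for every $\,m\geq\,N\,,$
$$
\frac{|(f_k-\,f_m)(x)-\,(f_k-\,f_m)(y)\,|}{(-\log{|\,x-\,y|})^{-\,\al}}<\,\ep\,.
$$
Letting $\,m\to\,\infty\,$ and using the pointwise convergences $\,f_m(x)\to\,f(x)\,,$ $\,f_m(y)\to\,f(y)\,,$ the left-hand side converges to the corresponding quotient for $\,f_k-\,f\,,$ which is therefore $\leq\,\ep\,.$ Since $\,x,\,y\,$ were arbitrary, taking the supremum yields $[\,f_k-\,f\,]_\al\leq\,\ep\,$ for all $\,k\geq\,N\,.$ Consequently $\,f_k-\,f\in\,D^{0,\,\al}(\Ov)\,,$ whence $\,f=\,f_k-\,(f_k-\,f)\in\,D^{0,\,\al}(\Ov)\,,$ and $\|\,f_k-\,f\,\|_\al=\,[\,f_k-\,f\,]_\al+\,\|\,f_k-\,f\,\|\to\,0\,.$

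I do not expect a genuine obstacle here; the argument is entirely parallel to the proof that $\,C^{0,\,\la}(\Ov)\,$ is complete, the only care being that the limit--supremum interchange must be performed with the pair $(x,\,y)$ held fixed before passing $\,m\to\,\infty\,,$ so as to reduce the passage to plain pointwise convergence. The logarithmic weight $(-\log{|\,x-\,y|})^{-\,\al}$ plays no special role beyond being, for each fixed $(x,\,y)\,,$ a fixed positive factor.
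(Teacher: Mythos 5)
Your argument is correct and is essentially the same as the paper's: both establish completeness by passing to the uniform limit in $C(\Ov)$, then fixing the pair $(x,\,y)$ and letting $m\to\infty$ in the Cauchy bound on the difference quotients before taking the supremum, which yields $[\,f_k-\,f\,]_\al\leq\,\ep$. The extra verification of the norm axioms is routine and the paper simply omits it.
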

\begin{proof}
Let $\,f_n\,$ be a Cauchy sequence in $\,D^{0,\,\al}(\Ov)\,$. Since
$\,C(\Ov)\,$ is complete, $\,f_n\,$ is uniformly convergent to some
$\,f\in\,C(\Ov)\,.$ Let now $\,x,\,y \in \Ov\,$, with
$\,|\,x-\,y\,|<\,1\,.$ One has
$$
\frac{|f_n(x)-\,f_n(y)\,|}{(-\log{|\,x-\,y|})^{-\,\al}}\leq\,[\,f_n\,]_\al
\leq\,Const.
$$
By passing to the limit as $\,n\rightarrow\,\infty\,$ one shows that
$\,f \in \,D_{0,\,\al}\,$. On the other hand, for each arbitrary
couple $\,x,\,y \in \Ov\,$ satisfying $\,|\,x-\,y\,|<\,1\,,$ one has
\begin{equation}
\begin{array}{l}
\frac{|(f(x)-f_n(x)\,)-\,(f(y)-\,f_n(y)\,)\,|}{(-\log{|\,x-\,y|})^{-\,\al}}=\\
\\
\,\lim_{m\,\rightarrow\,\infty}
\,\frac{|(f_m(x)-f_n(x))-\,(f_m(y)-\,f_n(y)\,)\,|}{(-\log{|\,x-\,y|})^{-\,\al}}\leq\,\limsup_{m\,\rightarrow\,\infty}
\,[\,f_m-\,f_n\,]_\al\,.
\end{array}
\end{equation}
Hence,
$$
[\,f-\,f_n\,]_\al\leq\,\limsup_{m\,\rightarrow\,\infty}
\,[\,f_m-\,f_n\,]_\al\,.
$$
By appealing to the Cauchy sequence hypothesis, one shows that the
right hand side of the above equation goes to zero as
$\,n\,\rightarrow\,\infty\,.$
\end{proof}
\begin{theorem}
For $\,\al>\bt>0\,,$ the embedding $\, D^{0,\,\al}(\Ov)\subset
D^{0,\,\bt}(\Ov)\,$ is compact.%
\label{embebes}
\end{theorem}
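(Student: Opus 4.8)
The continuous embedding itself is elementary: restricting to $\,|x-\,y|<\,1/e\,$ one has $\,-\log|x-\,y|>\,1\,,$ whence $\,(-\log|x-\,y|)^{\bt}\le\,(-\log|x-\,y|)^{\al}\,,$ so $\,[\,f\,]_{\bt;\,1/e}\le\,[\,f\,]_{\al;\,1/e}\,,$ and the norm equivalence \eqref{clearnot} gives $\,\|\,f\,\|_\bt\le\,C\|\,f\,\|_\al\,.$ The substance is compactness, which I would prove by sequential precompactness. Take $\,(f_n)\,$ bounded in $\,D^{0,\al}(\Ov)\,,$ say $\,\|\,f_n\,\|_\al\le\,M\,,$ and extract a subsequence converging in the $\,D^{0,\bt}(\Ov)\,$ norm. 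First I would observe that the $\,\al$-bound forces a uniform modulus of continuity: for $\,|x-\,y|<\,1\,,$ $\,|f_n(x)-\,f_n(y)|\le\,\om_{f_n}(|x-\,y|)\le\,M\,(-\log|x-\,y|)^{-\al}\,,$ and the right-hand side tends to $\,0\,$ as $\,|x-\,y|\to\,0\,$ since $\,\al>\,0\,.$ Together with $\,\|\,f_n\,\|\le\,M\,,$ this yields uniform boundedness and equicontinuity, so by Arzel\`a--Ascoli a subsequence, still denoted $\,(f_n)\,,$ converges uniformly to some $\,f\in\,C(\Ov)\,.$

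Writing $\,g_n=\,f_n-\,f\,,$ I then need only show $\,\|\,g_n\,\|_\bt\to\,0\,;$ since $\,\|\,g_n\,\|\to\,0\,$ by uniform convergence, the task reduces to proving $\,[\,g_n\,]_\bt\to\,0\,.$ Note $\,[\,g_n\,]_\al\le\,2M\,$ stays bounded (the limit $\,f\,$ lying in $\,D^{0,\al}(\Ov)\,$ by the lower-semicontinuity argument already used in Theorem \ref{probanas}). The key device is to split the supremum defining $\,[\,g_n\,]_\bt\,$ at a threshold $\,\ro\in(0,1)\,$:
\begin{equation}
[\,g_n\,]_\bt=\,\max\Big\{\,\sup_{0<r<\ro}\,\om_{g_n}(r)\,(-\log r)^{\bt},\ \ \sup_{\ro\le r<1}\,\om_{g_n}(r)\,(-\log r)^{\bt}\,\Big\}\,.
\end{equation}

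For the small-scale part I would interpolate against the $\,\al$-seminorm, writing $\,\om_{g_n}(r)(-\log r)^\bt=\,[\,\om_{g_n}(r)(-\log r)^\al\,]\,(-\log r)^{\bt-\al}\,.$ Since $\,\bt-\al<\,0\,$ the factor $\,(-\log r)^{\bt-\al}\,$ is increasing in $\,r\,$ on $\,(0,1)\,,$ so its supremum over $\,(0,\ro)\,$ is attained at $\,r=\ro\,,$ giving $\,\sup_{0<r<\ro}\om_{g_n}(r)(-\log r)^\bt\le\,[\,g_n\,]_\al\,(-\log\ro)^{\bt-\al}\le\,2M\,(-\log\ro)^{\bt-\al}\,,$ a bound independent of $\,n\,.$ For the large-scale part I use $\,\om_{g_n}(r)\le\,2\,\|\,g_n\,\|\,$ together with the fact that $\,(-\log r)^\bt\,$ is decreasing in $\,r\,,$ hence maximized at $\,r=\ro\,,$ to get $\,\sup_{\ro\le r<1}\om_{g_n}(r)(-\log r)^\bt\le\,2\,\|\,g_n\,\|\,(-\log\ro)^{\bt}\,.$ A standard $\,\ep/2\,$ argument then closes the proof: given $\,\ep>\,0\,,$ first fix $\,\ro\,$ small so that $\,2M(-\log\ro)^{\bt-\al}<\,\ep/2\,$ (possible since $\,\bt-\al<\,0\,$ and $\,-\log\ro\to\,+\infty\,$ as $\,\ro\to\,0^+\,$), and then choose $\,N\,$ with $\,2\,\|\,g_n\,\|(-\log\ro)^\bt<\,\ep/2\,$ for $\,n\ge\,N\,.$

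I expect the obstacles to be bookkeeping rather than conceptual. The crucial structural fact --- the analogue of the gain making the H\"older embedding $\,C^{0,\al}\Subset C^{0,\bt}\,$ compact --- is that the uniform $\,\al$-bound controls the dangerous small-$\,r\,$ regime through the factor $\,(-\log\ro)^{\bt-\al}\,,$ which shrinks to $\,0\,$ uniformly in $\,n\,$ as $\,\ro\to\,0^+\,,$ after which the remaining large-$\,r\,$ regime is governed solely by the sup-norm convergence supplied by Arzel\`a--Ascoli. The point to verify carefully is the direction of monotonicity of $\,(-\log r)^{\bt-\al}\,$ (so that the supremum over the small-$\,r\,$ window is correctly located at the endpoint $\,r=\ro\,$), keeping $\,\ro<\,1\,$ so that $\,-\log\ro>\,0\,$ and the negative power behaves as claimed.
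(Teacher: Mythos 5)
Your proof is correct, and its skeleton matches the paper's: Arzel\`a--Ascoli produces a uniformly convergent subsequence (your observation that the uniform $\al$-bound supplies equicontinuity is the same one the paper uses implicitly), the limit lies in $D^{0,\al}(\Ov)$ by the lower-semicontinuity argument from the completeness proof, and everything reduces to showing $[\,f_n-f\,]_\bt\to 0$. Where you diverge is in how that last convergence is extracted. The paper uses a single multiplicative interpolation, valid uniformly in $r$,
$$
\om_g(r)\,(-\log r)^{\bt}=\bigl(\om_g(r)(-\log r)^{\al}\bigr)^{\frac{\bt}{\al}}\,\om_g(r)^{1-\frac{\bt}{\al}}\leq\,[\,g\,]_\al^{\frac{\bt}{\al}}\,\bigl(2\,\|g\|\bigr)^{1-\frac{\bt}{\al}}\,,
$$
which gives the quantitative one-line bound $[\,g_n\,]_\bt\leq [\,g_n\,]_\al^{\bt/\al}(2\|g_n\|)^{1-\bt/\al}\to 0$. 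You instead split the supremum at a threshold $\ro$ and run an $\ep/2$ argument: the small-$r$ regime is controlled by $2M(-\log\ro)^{\bt-\al}$ uniformly in $n$, the large-$r$ regime by $2\|g_n\|(-\log\ro)^{\bt}$; your monotonicity checks for $(-\log r)^{\bt-\al}$ (increasing on $(0,1)$) and $(-\log r)^{\bt}$ (decreasing) are the right ones, and the argument closes. The paper's device is shorter and yields an explicit rate of convergence; yours is more elementary and more robust, since it never uses the identity $(-\log r)^{\bt}=\bigl((-\log r)^{\al}\bigr)^{\bt/\al}$ and so would transfer unchanged to weights that are not pure powers of $-\log r$.
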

\begin{proof}
Let  $\,f_n\,$ be a bounded sequence in  $\,D_{0,\,\al}(\Ov)\,$.
Then, by Ascoli-Arzela's theorem, there is a subsequence (denoted
again by the same symbol $\,f_n\,$), and a function $\,f \in
C(\Ov)\,,$ such that $\,f_n\,$ converges uniformly to $\,f\,.$ As in
the proof of Theorem \ref{probanas} one shows that $\,f \in
\,D_{0,\,\al}\,$. Hence, without loos of generality, by replacing
$\,f_n\,$ by $\,f_n-\,f\,,$ we may assume that $\,f=\,0\,.$
Furthermore, for$\,x,\,y \in \Ov\,$, with $\,|\,x-\,y\,|<\,1\,,$
\begin{equation}
\begin{array}{l}
\frac{|f_n(x)-\,f_n(y)\,|}{(-\log{|\,x-\,y|})^{-\,\bt}}\leq\,
\Big(\,\frac{|f_n(x)-\,f_n(y)\,|}{(-\log{|\,x-\,y|})^{-\,\al}}\,\Big)^{\frac{\bt}{\al}}\,
|f_n(x)-\,f_n(y)\,|^{1-\,\frac{\bt}{\al}}\\
\leq\,[\,f_n\,]^{\frac{\bt}{\al}}_\al\,|f_n(x)-\,f_n(y)\,|^{1-\,\frac{\bt}{\al}}
\leq\,[\,f_n\,]^{\frac{\bt}{\al}}_\al\,(\,2\,\|\,f_n\,\|\,)|^{1-\,\frac{\bt}{\al}},
\end{array}
\end{equation}
which tends to zero as $\,n\,$ goes to infinity.
\end{proof}
\begin{corollary}
The embedding $\,D^{0,\,\al}(\Ov) \subset\, C_*(\Ov)\,$ is
compact, if $\,\al >\,1\,.$%
\label{ecomes}
\end{corollary}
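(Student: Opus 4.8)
The plan is to factor the embedding through an intermediate H-log space, so that compactness comes for free from Theorem \ref{embebes}. Since $\,\al>\,1\,$, I would first fix an exponent $\,\bt\,$ with $\,1<\,\bt<\,\al\,$; for concreteness one may take $\,\bt=\,\frac{1+\,\al}{2}\,$. By Theorem \ref{embebes} the embedding $\,D^{0,\,\al}(\Ov)\Subset D^{0,\,\bt}(\Ov)\,$ is compact, while, by the estimate recorded just before Theorem \ref{probanas} (valid for every exponent $\,>1\,$, hence for $\,\bt\,$),
$$
[\,f\,]_{*,\,\de_0}\leq\,\frac{1}{(\bt-\,1)\,(-\log{\de_0})^{\,\bt-\,1}}\,[\,f\,]_{\bt;\,\de_0}\,,
$$
which shows that the embedding $\,D^{0,\,\bt}(\Ov)\subset C_*(\Ov)\,$ is continuous. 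Since the composition of a compact map with a continuous (bounded) map is compact, it follows at once that $\,D^{0,\,\al}(\Ov)\subset C_*(\Ov)\,$ is compact.

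Concretely, I would realize this by a sequence argument paralleling the proof of Theorem \ref{embebes}. Starting from a bounded sequence $\,f_n\,$ in $\,D^{0,\,\al}(\Ov)\,$, I would invoke Theorem \ref{embebes} with the intermediate exponent $\,\bt\,$ to extract a subsequence, still written $\,f_n\,$, converging in $\,D^{0,\,\bt}(\Ov)\,$ to some limit $\,f\,$. Applying the displayed inequality to $\,f_n-\,f\,$ gives $\,[\,f_n-\,f\,]_{*,\,\de_0}\to\,0\,$, and since $\,f_n\to\,f\,$ uniformly as well, one concludes $\,\|\,f_n-\,f\,\|_{*,\,\de_0}\to\,0\,$, i.e. convergence in $\,C_*(\Ov)\,$.

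The argument presents no real obstacle; the only thing to check is that an admissible intermediate exponent exists, and this is exactly what the hypothesis $\,\al>\,1\,$ guarantees, since it leaves room to place $\,\bt\,$ strictly between $\,1\,$ and $\,\al\,$. The threshold $\,\al>\,1\,$ is sharp for this method: the continuity $\,D^{0,\,\bt}(\Ov)\subset C_*(\Ov)\,$ relies on the convergence of the integral \eqref{simvales}, which is available precisely when the exponent exceeds $\,1\,$.
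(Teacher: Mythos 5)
Your proposal is correct and follows exactly the paper's own argument: the paper likewise deduces the corollary by choosing an intermediate exponent $\,\bt\,$ with $\,\al>\,\bt>\,1\,$, combining the compact embedding $\,D^{0,\,\al}(\Ov)\Subset D^{0,\,\bt}(\Ov)\,$ from Theorem \ref{embebes} with the continuity of $\,D^{0,\,\bt}(\Ov)\subset C_*(\Ov)\,$. Your additional explicit sequence argument and the remark on the sharpness of the threshold $\,\al>\,1\,$ are consistent elaborations of the same route.
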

The result follows by appealing to the continuity of the embedding
$\,D^{0,\,\bt}(\Ov) \subset\, C_*(\Ov)\,,$ for some $\,\bt\,$
satisfying $\,\al>\,\bt>\,1\,.$\par%
\begin{theorem}
Set
$$
\Om_{\ro} \equiv\,\{\,x:\, dist(x,\,\Om\,) <\,\ro\,\}\,.
$$
There is a $\,\ro>0\,$ such that the following holds. There is a
linear continuous map $\,T\,$ from $\,C(\Ov)\,$ to
$\,C(\Ov_{\ro})\,,$ and from $\,D^{0,\,\al}(\Ov)\,$ to
$\,D^{0,\,\al}(\Ov_{\ro})\,,$ such that $\,T\,f(x)=\,f(x)\,$, for
each $\,x \in\,\Ov\,.$%
\label{bah}
\end{theorem}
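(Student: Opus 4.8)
The plan is to realize $\,T\,$ as composition with a nearest-point retraction onto $\,\Ov\,.$ Since $\,\Ga\,$ is of class $\,C^{2,\,\la}\,,$ the signed distance function to $\,\Ga\,$ is $\,C^{2,\,\la}\,$ in a neighborhood of $\,\Ga\,$; consequently there is a $\,\ro>\,0\,$ such that every point $\,x\,$ of the outer collar $\,\Om_\ro\setminus\,\Om\,$ has a unique nearest point $\,P(x)\in\,\Ga\,,$ and the retraction $\,P\,$ is of class $\,C^{1,\,\la}\,,$ hence Lipschitz, on that collar. I complete $\,P\,$ to all of $\,\Ov_\ro\,$ by setting $\,P(x)=\,x\,$ for $\,x\in\,\Ov\,,$ and I define
\begin{equation}
Tf(x)=\,f(P(x))\,,\qquad x\in\,\Ov_\ro\,.
\end{equation}
The two definitions of $\,P\,$ agree on $\,\Ga\,,$ so $\,P\,$ is continuous on $\,\Ov_\ro\,$ and $\,Tf\,$ is continuous. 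The map $\,T\,$ is clearly linear and satisfies $\,Tf=\,f\,$ on $\,\Ov\,.$ Since the range of $\,P\,$ is exactly $\,\Ov\,,$ one has $\,\|\,Tf\,\|=\,\|\,f\,\|\,,$ i.e. $\,T\,$ is a sup-norm isometry; this already yields continuity of $\,T\,$ from $\,C(\Ov)\,$ to $\,C(\Ov_\ro)\,.$

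Next I would control the $\,D^{0,\,\al}\,$ seminorm, the only nontrivial point. First I check that the completed $\,P\,$ is globally Lipschitz on $\,\Ov_\ro\,,$ say $\,|P(x)-\,P(y)|\leq\,C_0\,|x-\,y|\,.$ This is immediate when both points lie in $\,\Ov\,$ (constant $\,1\,$) or both lie in the collar (constant $\,L\,$, the collar Lipschitz constant). The delicate case is $\,x\in\,\Ov\,,$ $\,y\,$ in the collar: here $\,P(x)=\,x\,$ and, using $\,\mathrm{dist}(y,\,\Ov)=\,|y-\,P(y)|\leq\,|x-\,y|\,,$ one gets $\,|P(x)-\,P(y)|=\,|x-\,P(y)|\leq\,|x-\,y|+\,|y-\,P(y)|\leq\,2\,|x-\,y|\,.$ Thus $\,C_0=\,\max\{2,\,L\}\,.$ Because $\,\om_f\,$ is nondecreasing, this transfers to the moduli of continuity as $\,\om_{Tf}(r)\leq\,\om_f(C_0\,r)\,$ on $\,\Ov_\ro\,.$

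The main obstacle is the logarithmic weight, which does not rescale as cleanly as the factor $\,1/r\,$ of the H\"older theory. Passing from the modulus estimate to the seminorm gives, for $\,0<\,r<\,\de_0\,,$
\begin{equation}
\frac{\om_{Tf}(r)}{(-\log r)^{-\,\al}}\leq\,\frac{\om_f(C_0\,r)}{(-\log(C_0\,r))^{-\,\al}}\cdot\Big(\frac{-\log r}{-\log(C_0\,r)}\Big)^{\al}\,.
\end{equation}
The first factor is bounded by $\,[\,f\,]_{\al;\,C_0\de_0}\leq\,[\,f\,]_\al\,$ as soon as $\,C_0\,\de_0<\,1\,.$ The ratio $\,(-\log r)/(-\log(C_0\,r))\,$ equals $\,1+\,(\log C_0)/(-\log r-\,\log C_0)\,$ and is increasing in $\,r\,$ on $\,(0,\,\de_0)\,,$ hence bounded there by its value at $\,\de_0\,.$ It is precisely this ratio that blows up as $\,r\to\,1/C_0\,,$ which forces one to restrict to small increments; the restriction is legitimate because, by the equivalence \eqref{clearnot}, it suffices to bound the truncated seminorm $\,[\,\cdot\,]_{\al;\,\de_0}\,.$

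Choosing $\,\de_0<\,1/C_0\,$ therefore gives $\,[\,Tf\,]_{\al;\,\de_0}\leq\,C\,[\,f\,]_\al\,,$ and combining with the sup-norm isometry yields $\,\|\,Tf\,\|_{\al;\,\de_0}\leq\,C'\,\|\,f\,\|_\al\,.$ Invoking once more the norm equivalence \eqref{clearnot} on $\,\Ov_\ro\,,$ this shows that $\,T\,$ maps $\,D^{0,\,\al}(\Ov)\,$ continuously into $\,D^{0,\,\al}(\Ov_\ro)\,,$ completing the proof.
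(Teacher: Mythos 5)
Your proof is correct and follows essentially the same route as the paper: extension by the nearest-point projection onto $\Ov$ (the paper's orthogonal projection onto $\Ga$), transfer of the modulus of continuity through the Lipschitz constant of that retraction, and control of the logarithmic weight by restricting to increments $|x-y|<\de_0$ with $C_0\de_0<1$ together with the norm equivalence \eqref{clearnot}. Your explicit bound on the ratio $(-\log r)/(-\log(C_0 r))$ plays exactly the role of the paper's inequality \eqref{bekapa}, and your verification of the global Lipschitz constant in the mixed case is a welcome extra detail.
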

\begin{proof}
For a sufficiently small positive $\,\ro\,$, which depends only on
$\,\Om\,,$ we can construct a system of parallel surfaces
$\,\Ga_r\,$, $\,0 \leq\,r\leq\,\ro\,,$ such that the surface
$\,\Ga_r\,$ lies outside $\,\Om\,,$ at a distance $\,r\,$ from
$\,\Ga_0=\,\Ga\,$. For each $\,x \in \Ov_{\ro}-\,\Om\,,$ denote by
$\,\ox\,$ the orthogonal projection of $\,x\,$ upon the boundary
$\,\Ga\,.$ We define the extension $\,T\,f=\cf\,$ by setting
$\,\cf(x) =\, f(x)\,$ if $\,x\, \in\,\Ov\,,$ and $\,\cf(x)
=\,f(\ox)\,$ if $\,x\, \in\,\Ov_{\ro} -\,\Ov\,.$ For convenience,
assume that $\,\ro<\,1\,$.\par%
Since $\,\Ga\,$ is smooth and compact, the map $\,x \rightarrow
\,\ox\,$ is Lipschitz continuous. This guarantees the existence of
the constant $\,k\,$, which depends only on $\,\Om\,,$ considered
below. The map $\,T f=\,\cf\,$ is clearly linear continuous from
$\,C(\Ov)\,$ to $\,C(\Ov_{\ro})\,$. Define
$$
\om_{\cf,\,\Om_\ro}(\,r)= \,\sup_{x,\,y
\in\,\Om_\ro ;\,|x-\,y|<\,r} \, |\,\cf(x)-\,\cf(y)\,|\,,%
$$
Next, we show that
\begin{equation}
\om_{\cf,\,\Om_\ro}(\,r) \leq\,\om_{f}(\,k\,r)\,,%
\label{olecf}
\end{equation}
for $\,r \in \,(0,\,\ro\,)\,,$ and some $\,k=\,k(\Om) \geq\,1\,$.
Assume $\,|x-\,y|\leq\,\ro\,.$ If $ x\in \,\Om_c\,$ and $ \,y
\in\,\Ov\,,$ then
$$
|\,\cf(x) -\,\cf(y)\,|=\,|\,f(\ox) -\,f(y)\,|\,,\quad \textrm{and}
\quad \,|\ox-\,y| \leq\,k\,|x-\,y|\,.
$$
If $x,\,y \in\,\Om_c\,,$ then
$$
|\,\cf(x) -\,\cf(y)\,|=\,|\,f(\ox) -\,f(\oy)\,|\,,\quad
\textrm{and}\quad |\ox-\,\oy| \leq\,k\,|x-\,y|\,.
$$
Note that, in a neighborhood of a flat portion of $\,\Ga,\,$ one has
$\,k=\,1\,.$ Equation \eqref{olecf} follows easily. Hence, for
$\,\de_0 \leq\,\ro\,,$
$$
[\,\cf\,]_{\al,\,\de_0;\,\Om_\ro} =\,
\sup_{\,0<\,r<\,\de_0}\,\frac{\om_{\cf,\,\Om_\ro}(\,r)}{(-\log{r})^{-\,\al}}
\leq\,
\sup_{\,0<\,r<\,k\,\de_0}\,\frac{\om_{f}(r)}{\Big(-\log{\frac{r}{k}}\Big)^{-\,\al}}\,.%
$$
Further, note that for $\,k>\,0\,,$
$\,0<\,r<\,k^{\frac{1}{1-\,b}}\,,$ and $\,0\,< b <\,1\,,$ one has
\begin{equation}
\log{\frac{r}{k}} \leq\,b\,\log{r}\,.%
\label{bekapa}
\end{equation}
By setting $\,b=\,\frac12\,,$ one easily shows that
$$
[\,\cf\,]_{\al,\,\de_0;\,\Om_\ro}\leq\,2^\al\,[\,f\,]_{\al,\,k\de_0}\,.
$$
Clearly, we impose to $\,\de_0\,$ the constraint $\,k\,\de_0
<\,1\,,$ to give sense to the right hand side of the above
inequality. Equivalence of full norms is guaranteed by
\eqref{clearnot}.%
\end{proof}
The next result shows that regular functions are dense in
$\,D^{0,\,\al}(\Ov)\,.$
\begin{theorem}
The set $\,C^{\infty}(\Ov)\,$ is dense in $\,D^{0,\,\al}(\Ov)\,.$%
\label{lemum}
\end{theorem}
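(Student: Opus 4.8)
The plan is to approximate by mollification, after first pushing $f$ to a neighbourhood of $\Ov$ so that the mollified functions are smooth up to the boundary. First I would apply the extension operator $\,T\,$ of Theorem \ref{bah} to replace $\,f\,$ by $\,\cf=\,Tf\in\,D^{0,\,\al}(\Ov_\ro)\,,$ which agrees with $\,f\,$ on $\,\Ov\,$ and satisfies $\,\om_{\cf,\,\Om_\ro}(r)\leq\,\om_f(kr)\,,$ cf. \eqref{olecf}, with $\,k=\,k(\Om)\geq\,1\,.$ I would then fix a nonnegative mollifier $\,\rho\in\,C^\infty(\R^n)\,$ supported in the unit ball and of unit integral, set $\,\rho_\ep(z)=\,\ep^{-n}\rho(z/\ep)\,,$ and define $\,f_\ep=\,\cf\ast\rho_\ep\,.$ For $\,\ep<\,\ro\,$ the restriction of $\,f_\ep\,$ to $\,\Ov\,$ is the restriction of a $\,C^\infty(\R^n)\,$ function, so it suffices to show $\,\|\,f-\,f_\ep\,\|_\al\rightarrow\,0\,$ as $\,\ep\rightarrow\,0\,,$ and by \eqref{clearnot} it is enough to control the seminorm over a fixed small ball of radii.

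Uniform convergence is immediate: writing $\,f_\ep(x)-\,f(x)=\,\int(\cf(x-\,z)-\,\cf(x))\,\rho_\ep(z)\,dz\,$ and using $\,|z|\leq\,\ep\,$ on the support gives $\,\|\,f-\,f_\ep\,\|\leq\,\om_{\cf,\,\Om_\ro}(\ep)\leq\,\om_f(k\ep)\rightarrow\,0\,.$ The content is the seminorm convergence $\,[\,f-\,f_\ep\,]_\al\rightarrow\,0\,.$ Set $\,g=\,f-\,f_\ep\,.$ I would bound $\,\om_g(r)\,$ by two complementary estimates. Since convolution does not increase the modulus of continuity, $\,\om_{f_\ep}(r)\leq\,\om_{\cf,\,\Om_\ro}(r)\leq\,\om_f(kr)\,,$ whence $\,\om_g(r)\leq\,\om_f(r)+\,\om_{f_\ep}(r)\leq\,2\,\om_f(kr)\,;$ on the other hand $\,\om_g(r)\leq\,2\,\|\,g\,\|\leq\,2\,\om_f(k\ep)\,.$

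I would then split the supremum defining $\,[\,g\,]_\al\,$ at a scale $\,r=\,\ep^\te\,,$ with $\,\te\in\,(0,1)\,$ to be chosen. On the far range $\,\ep^\te\leq\,r<\,1\,$ I would use the second estimate together with $\,\om_f(k\ep)\leq\,[\,f\,]_\al\,(-\log(k\ep))^{-\,\al}\,$ from \eqref{alfas2} and $\,(-\log r)^\al\leq\,\te^\al(-\log\ep)^\al\,,$ which yields a bound of order $\,\te^\al\,[\,f\,]_\al\,(1+\,o(1))\,;$ choosing $\,\te\,$ small renders this piece as small as desired. On the near range $\,0<\,r<\,\ep^\te\,$ I would use the first estimate, getting $\,\om_g(r)(-\log r)^\al\leq\,2\,\om_f(kr)(-\log(kr))^\al\cdot\big((-\log r)/(-\log(kr))\big)^\al\,,$ the last factor tending to $\,1\,.$

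The hard part will be exactly this near range. It reduces to showing that the small-scale tail $\,\sup_{0<\,s<\,\de}\om_f(s)(-\log s)^\al\,$ tends to $\,0\,$ as $\,\de\rightarrow\,0\,,$ equivalently that $\,\om_f(s)(-\log s)^\al\rightarrow\,0\,$ as $\,s\rightarrow\,0\,.$ This vanishing is the $\,D^{0,\,\al}\,$ analogue of the \emph{little} H\"older condition, and it is the genuine obstacle of the argument, since a priori only the finiteness of $\,[\,f\,]_\al\,$ is granted, not the limit. I therefore expect the decisive step to be either the verification of this small-scale vanishing, or, failing that in general, the identification of the dense subclass on which it holds followed by an approximation argument patching the two ranges through the equivalence \eqref{clearnot}; every remaining step is a routine adaptation of the corresponding H\"older-space reasoning.
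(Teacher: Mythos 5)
The paper offers no actual proof of Theorem \ref{lemum}: it only asserts that the result follows by combining Theorem \ref{bah} with Friedrichs mollification, which is precisely the route you take. Your execution is correct as far as it goes, and the obstacle you isolate at the end is not a technicality you failed to overcome --- it is a genuine obstruction, and in fact the statement is false as written. Density of $C^{\infty}(\Ov)$ in $D^{0,\,\al}(\Ov)$ would force every $f\in D^{0,\,\al}(\Ov)$ to satisfy the little-space condition $\om_f(s)\,(-\log s)^{\al}\to 0$ as $s\to 0$ (exactly the property you identify): indeed for smooth $g$ one has $\om_g(s)\,(-\log s)^{\al}\le c\,\|\na g\|\, s\,(-\log s)^{\al}\to 0$, and the condition passes to limits in the seminorm $[\,\cdot\,]_{\al}$, since $\limsup_{s\to 0}\om_f(s)(-\log s)^{\al}\le [\,f-g\,]_{\al}+\limsup_{s\to 0}\om_g(s)(-\log s)^{\al}$. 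But the condition fails for, e.g., $f(x)=(-\log|x|)^{-\,\al}$ near an interior point taken as origin, with $f(0)=0$: a direct check gives $\om_f(r)\le C(-\log r)^{-\,\al}$, so $f\in D^{0,\,\al}$, while for any $g\in C^1(\Ov)$,
$$
[\,f-g\,]_{\al}\;\ge\;\limsup_{x\to 0}\,\frac{|(f-g)(x)-(f-g)(0)|}{(-\log|x|)^{-\,\al}}\;\ge\;1-\lim_{x\to 0}\,c\,\|\na g\|\,|x|\,(-\log|x|)^{\al}\;=\;1\,.
$$
So no smooth function approximates this $f$ in $\|\cdot\|_{\al}$. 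This is the exact analogue of the classical fact that $C^{\infty}(\Ov)$ is \emph{not} dense in $C^{0,\,\la}(\Ov)$ (the closure being the little H\"older space); the sup-type seminorm \eqref{alfas2} behaves here like the H\"older seminorm, not like the integral seminorm \eqref{seis} of $C_*(\Ov)$, for which the tail $\int_0^{\eta}\om_f(r)\,dr/r$ is automatically small and density does hold.

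Concretely, then: your extension-plus-mollification skeleton and your far-range/near-range splitting are the right machinery, the far range is fine, and the near range is exactly where the argument must (and does) break. What your two estimates for $\om_{f-f_\ep}$ actually prove is that the closure of $C^{\infty}(\Ov)$ in $D^{0,\,\al}(\Ov)$ is the proper subspace $\{\,f:\ \om_f(s)\,(-\log s)^{\al}\to 0\ \textrm{as}\ s\to 0\,\}$. You should state this as the conclusion, rather than leaving open the alternative between ``verifying the vanishing in general'' (impossible) and ``identifying the dense subclass'' (which is the correct repair of the theorem).
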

The proof of this result, left to the reader, follows a well known
argument, namely, a suitable combination of Theorem \ref{bah} with
the classical Friedrichs' mollification technique (left to the
reader).

\vspace{0.2cm}

We end this section by showing that the regularity result claimed in
Theorem \ref{laplolas} is optimal. We assume $\,\bL=\,\De\,$. The
"singular point" is here the origin. Consider the function
\begin{equation}
u(x)=\,(\,-\,\log |x|\,)^{-\,\al}\,\sum_{i\neq\,j} x_i\,x_j\,,
\label{cotresa}
\end{equation}
where $\,\al>\,0\,,$ and $\,|x|<\,1\,.$ Direct calculations show
that each second order derivative of $\,u(x)\,$ is a combination of
negative powers of $\,-\,\log |x|\,,$ with $\,x\,$ dependent
coefficients, homogeneous of degree zero. However, the larger
negative power is $\,-\,(\al+\,1\,)\,$ for double derivatives, and
$\,-\,\al\,$ for mixed derivatives. It follows that $ \pa^{\,2}_i\,u
\in\,\,D^{0,\,\al+\,1}(\Ov)\,$, for each $i=\,1,\,...,\,n\,,$ in
particular,
$$
\De\,u \in\,\,D^{0,\,\al+\,1}(\Ov)\,.
$$
However, for $\,i\neq\,j\,,$
$$
\, \pa_i\,\pa_j\,u \notin\,D^{0,\,\beta}(\Ov)\,,
$$
if $\,\beta>\,\al\,,$ due to the presence of a $\,(\,-\,\log
|x|\,)^{-\,\al}\,$ term.\par%
By multiplication of $\,u(x)\,$ by an infinitely differentiable
function with compact support inside the unit sphere, and equal to
$\,1\,$ in a small neighborhood of the origin, we extend the above
counterexample to homogeneous boundary
value problems.\par%
The above counter-example appears formally similar to that given in
reference \cite{morrey}, at the end of section 2.6, where the author
shows that the potential of a continuous function has not
necessarily continuous second order derivatives. Note that $\,u\,$
is here the solution, not the "external force" $\,f=\,-\De\,u\,.$

\vspace{0.2cm}

Taking into account the formal relation between the above, and the
H\"older spaces, it would be interesting to define the spaces
$\,D^{0,\,\al}(\Ov)\,$ by means of an integral formulation. For instance, set%
\begin{equation}
[\,f\,]^p_{p,\,\la}\equiv\,\sup_{x_0 \in\,\Om\,,\,
0<\,\rho<\,1}\,(-\log{\rho})^{\la}
\,\int_{\Om(x_0,\,\rho)}\,|\,u(x)-\,u_{x_0,\,\rho}\,|^p
\,|\,x-\,x_0|^{-\,n} \,dx\,,%
\label{integum}
\end{equation}
where $\,u_{x_0,\,\rho}\,$ denotes the mean value of $\,u(x)\,$ in
$\,\Om(x_0,\,\rho)\,,$ and $\,\la>\,0\,.$ One easily shows that
\begin{equation}
[\,f\,]_{p,\,\la}\leq\,c\,[\,f\,]_{\al}\,, \quad \textrm{where}
\quad
\al=\,\frac{1+\,\la}{p}\,.%
\label{integdos}
\end{equation}
It should be not difficult to show that inequality \eqref{integdos}
is false for small values of the parameter $\,\al\,.$

\vspace{0.2cm}

\begin{acknowledgement}  \rm{Concerning references, the author is
particularly grateful to Francesca Crispo for calling our attention
to the treatise \cite{fiorenza}, to which the reader is referred,
and also to some related papers. In particular, as claimed in the
introduction of the above volume, spaces $\,D^{0,\,\al}(\Ov)\,,$ in
the particular case $\,\al=\,1\,,$ were introduced in reference
\cite{shara}. See also definition 2.2 in reference \cite{fiorenza}.
Other main references are \cite{diening}, \cite{samko},
\cite{zhikov1}, \cite{zhikov2}, and \cite{zhikov3}. }
\end{acknowledgement}
\section{Proof of Theorem \ref{ofundas}.}\label{apotes}%
In this section we prove the Theorem \ref{ofundas}, which is the
counterpart of the classical H\"older-Korn-Lichtenstein-Giraud
inequality in H\"older spaces.
\begin{proof}
Let $x_0,\,x_1 \in I(R)\,$, $0<|x_0-\,x_1|=\,\de <\frac19\,.$ From
\eqref{simp} it follows that
$$
(\mK \ast \phi)(x)=\,\int \,\big(\,\phi(y)-\phi(x)\big)\,\mK(x-\,y)
\,dy\,.
$$
Hence, with abbreviate notation,

\begin{equation}
\begin{array}{l}
(\mK \ast \phi)(x_0)\,-(\mK \ast \phi)(x_1)=\\
\\
\int\, \Big\{\,\big(\,\phi(y)-\phi(x_0)\big)\,\mK(x_0-\,y)
\,-\big(\,\phi(y)-\phi(x_1)\big)\,\mK(x_1-\,y)\,\Big\} \,dy=\\
\\
\int_{|y-x_0|<\,2\de} \,\{...\} \,dr +\,\int_{2\de
<|y-x_0|<\,\frac12} \,\{...\} \,dr +\,\int_{\frac12<|y-x_0|}
\,\{...\} \,dr\equiv \,I_1+I_2+I_3\,.
\end{array}
\label{decomp}
\end{equation}
Note that%
$$
\{y:\,|y-x_1|<\,2\de\} \subset\, \{|y-x_0|<\,3\de\}\,.
$$
Moreover, $3 \de < 1\,.$ So

\begin{equation}
\begin{array}{l}
\int_{|y-x_0|<\,2\de}
\,\big|\,\phi(y)-\phi(x_1)\big|\,|\mK(x_1-\,y)| \,dy \leq\\%
\\
\int_{|y-x_1|<\,3\de}
\,\big|\,\phi(y)-\phi(x_1)\big|\,|\mK(x_1-\,y)| \,dy\leq \\
\\
{[}\, \phi\,{]}_\al \int_{|y-x_1|<\,3\de} \,\frac{|\sg(x_1
-y)|}{|x_1 -y|^n}\,(-\log{|\,x_1-\,y|})^{-\,\al}\,dy\,.
\end{array}
\end{equation}
Next, by appealing to polar-spherical coordinates whith $\,
r=\,|x_1-y|\,,$  by appealing to the fact that $\sg$ is positive
homogeneous of order zero, and  by appealing to \eqref{simvales} it
readily follows that

\begin{equation}
\begin{array}{r}
\int_{|y-x_0|<\,2\de}
\,\big|\,\phi(y)-\phi(x_1)\big|\,|\mK(x_1-\,y)| \,dy \leq\\
\\
\frac{c}{\al-\,1}\,(-\log{3\de})^{-\,(\al-\,1)}\,\|\sg\|\,{[}\,
\phi\,{]}_\al\,.%
\end{array}
\label{pora3}
\end{equation}

A similar, simplified, argument shows that equation \eqref{pora3}
holds by replacing $x_1$ by $x_0$ and $3\de$ by $2\de$. So,
$$
|I_1| \leq
\,\frac{2\,c}{\al-\,1}\,(-\log{3\de})^{-\,(\al-\,1)}\,\|\sg\|\,{[}\,
\phi\,{]}_\al\,.%
$$
Since, by \eqref{bekapa}, $\,\log{(k\,\de)}\leq\,\frac12\,\log\de\,$
for $\,0<\,\de\leq\,\frac{1}{k^2}\,,$ it follows that
\begin{equation}
|I_1| \leq
\,c\,\frac{2^{\,\al}}{\al-\,1}\,(-\log{\de})^{-\,(\al-\,1)}\,\|\sg\|\,{[}\,
\phi\,{]}_\al\,.%
\label{pora4}
\end{equation}

\vspace{0.2cm}

On the other hand
$$
I_2=\,\int_{2\de <|y-x_0|<\,\frac12}
\,\big(\,\phi(x_1)-\phi(x_0)\big)\,\mK(x_0-\,y) \,dy+
$$
$$
\int_{2\de<|y-x_0|<\,\frac12}
\,\big(\,\phi(y)-\phi(x_1)\big)\,\big(\mK(x_0-\,y)-\,\mK(x_1-\,y)\big)
\,dy\,.
$$
The first integral vanishes, due to \eqref{simp}. Hence,
$$
|I_2| \leq\,\int_{2\de<|y-x_0|<\,\frac12}
\,\big|\,\phi(y)-\phi(x_1)\big|\,\big|\,\mK(x_0-\,y)-\,\mK(x_1-\,y)\,
\big| \,dy\,.
$$
Further, by the mean-value theorem, there is a point $x_2$, between
$x_0$ and $x_1$, such that
$$
\big|\,\mK(x_0-\,y)-\,\mK(x_1-\,y)\,\big|
\leq\,\big|\,\na\,\mK(x_2-\,y)\,\big|\,\de\,.
$$
Since
$$
\pa_i\,\mK(x)=\,\frac{1}{|x|^{n+\,1}}\,\Big[\,(\pa_i\,\sg)\Big(\frac{x}{|x|}\Big)\,-\,n\,\frac{x_i}{|x|}\,\sg(x)\,\Big]\,,
$$
it readily follows that
\begin{equation}
\begin{array}{l}
\big|\,\mK(x_0-\,y)-\,\mK(x_1-\,y)\,\big| \leq \\
\\
c\,\||\,\sg\,\|| \,\frac{\de}{|y-\,x_2|^{n+1}}
\leq\,c\,\||\,\sg\,\||
\,\frac{\de}{|y-\,x_0|^{n+1}}\,,%
\end{array}
\label{esao2}
\end{equation}
where $\,\||\,\sg\,\||\,$ denotes the sum of the $L^\infty$ norms of
$\sg$ and of its first order derivatives on the surface of the unit
sphere $I(0,1)\,.$ Note that
$$
|x_0-\,y|\leq\,2\,|x_2-\,y|\leq\,4\,|x_0-\,y|
$$
for
$\,|x_0-\,y|>\,2\,\de\,.$\par%
On the other hand, for $\,2\,\de <|x_0-\,y|<\,\frac12\,,$
$$
|x_1-\,y|\leq\, \frac32\,|x_0-\,y|<\,\frac34\,.
$$
So,
$$
|\,\phi(y)-\phi(x_1)\,| \leq\,[\,\phi\,]_\al \,(-\log{
(\,\frac32\,|\,x_0-\,y|}\,)\,)^{-\,\al}\,.
$$
The above estimates show that
\begin{equation}
|\,I_2\,| \leq\,C\,\||\,\sg\,\||\,[\,\phi\,]_\al
\,\de\,\int_{2\de}^{\frac12} \,
(-\log{\frac32\,r})^{-\,\al}\, r^{-2} \,dr\,.%
\label{esao2nao}
\end{equation}
Next, by appealing to L'H\^opital's rule, one gets
$$
\lim_{\de\rightarrow \,0} \,\frac{\,\int_{2\de}^{\frac12} \,
(-\log{\frac32\,r})^{-\,\al}\,r^{-2}
\,dr}{\,\de^{-\,1}(-\log{\de})^{-\,\al}}=\,\frac12\,.
$$
It follows that there is a positive constant $\de_0=\,\de_0(\al)$
such that
\begin{equation}
\de\,\int_{2\de}^{\frac12} \, (-\log{\frac32\,r})^{-\,\al}\,r^{-2}
\,dr \leq\,(-\log{\de})^{-\,\al}\,, \quad \forall \,\de  \in
\,(0,\,\de_0)\,.%
\label{dezer}
\end{equation}
We fix $\,\de_0\,$ satisfying $\de_0\leq\,\frac19\,.$ One has
\begin{equation}
|\,I_2\,| \leq\,C\,|\|\,\sg\,\||\,[\,\phi\,]_\al
\,(-\log{\de})^{-\,\al}\,,
 \quad \forall \,\de  \in
\,(0,\,\de_0)\,.%
\label{esao23}
\end{equation}

\vspace{0.2cm}

Finally we consider $I_3$. By arguing as for $I_2$, in particular by
appealing to \eqref{simp} and \eqref{esao2}, one shows that
$$
|I_3| \leq\,C\,\de\,|\|\,\sg\,\||\,\int_{|y-x_0|>\,\frac12} \,
\,\frac{|\,\phi(y)-\phi(x_1)\big|}{|y-\,x_0|^{n+1}} \,dy
\leq\,C\,\de\,|\|\,\sg\,\||\,\|\,\phi\,\|\,.
$$
This last inequality, together with \eqref{esao2nao} and
\eqref{pora4}, shows that
\begin{equation}
|\,(\mK \ast \phi)(x_0)\,-(\mK \ast
\phi)(x_1)\,|\leq\,C\,(-\log{\de})^{-\,(\al-\,1)}\,\|\,
\phi\,\|_\al\,,%
\label{tresis}
\end{equation}
for each couple of points $\,x_0,\,x_1\,$ such that
$\,0<\,|x_0-\,x_1\,| \leq\,\de_0(\al)\,.$\par%
In agreement to definition \eqref{alfas2}, we must estimate $|\,(\mK
\ast \phi)(x_0)\,-(\mK \ast \phi)(x_1)\,|\,$ for all couple of
points for which $\,0<\,|x_0-\,x_1\,| < 1\,.$ For brevity, we appeal
here to a rough argumentation. Assume that
$\,\de_0<\,|x_0-\,x_1\,|=\,\de < 1\,.$ Let $\,N>\,1\,$ be an integer
such that $\,(N-\,1)\,\de_0 <\de \leq\,\,N\,\de_0\,.$ For
simplicity, we assume that $\,\de=\,\,N\,\de_0\,.$ Consider the
sequence of points $\,z_m=\,x_0
+\,m\,\frac{\de_0}{\de}(x_1-\,x_0)\,,$ for $\,m=\,0,...,N\,.$ One
has $\,z_0=\,x_0\,,$  $\,z_N=\,x_1\,,$ and $|\,z_{m+\,1}
-\,z_m\,|=\,\de_0\,.$ By appealing to equation \eqref{tresis},
applied to each couple of points $\,z_m,\,z_{m+\,1}\,$, a standard
argument shows that
\begin{equation}
\begin{array}{l}
|\,(\mK \ast \phi)(x_0)\,-(\mK \ast
\phi)(x_1)\,|\leq\,C\,N\,(-\log{\de_0})^{-\,(\al-\,1)}\,\|\,
\phi\,\|_\al\\
\\
\leq\,\frac{C}{\de_0}\,(-\log{\de})^{-\,(\al-\,1)}\,\|\,
\phi\,\|_\al\,.%
\end{array}%
\label{esaop}
\end{equation}
\end{proof}
\section{On elliptic regularity.}\label{elipse}%
In this chapter we apply the theorem \ref{ofundas} to prove
regularity, in the framework of H-log spaces, for solutions of the
elliptic equation \eqref{lapnao}. We follow the proof developed in
H\"older spaces in \cite{JBS}, part II, section 5. So, in the
sequel, the reader will be often referred to the above reference.
We assume that $n\geq\,3\,.$%

\vspace{0.2cm}

By a fundamental solution of the differential operator $\,\bL\,$ one
means a distribution $\,J(x)\,$ in $\,\R^n\,$ such that
\begin{equation}
\bL\,J(x)=\,\de(x)\,.%
\label{fundas}
\end{equation}
The celebrated Malgrange–-Ehrenpreis theorem states that every
non-zero linear differential operator with constant coefficients has
a fundamental solution (see, for instance, \cite{yosida}, Chap. VI,
sec. 10). We recall that the analogue for differential operators
whose coefficients are polynomials (rather than constants)
is false, as shown by a famous Hans Lewy's counter-example.\par%
In particular, for a second order elliptic operator with constant
coefficients and only higher order terms, one can construct
explicitly a fundamental solution $\,J(x)$ which satisfies the
properties (i), (ii), and (iii), claimed in \cite{JBS}, namely,\par%
(i)  $J(x)$ is a real analytic function for $\,|x| \neq\,0\,.$\par%
(ii) Since $n\geq\,3\,,$
\begin{equation}
J(x)=\,\frac{\sg(x)}{|x|^{n-\,2}}\,,%
\label{jicas}
\end{equation}
where $\sg(x)$ is positive homogeneous of degree $\,0\,$.\par%
(iii) Equation \eqref{fundas} holds. In particular, for every
sufficiently regular, compact supported, function $\ph$, one has
$$
\ph(x)=\,\int \,J(x-\,y)\,(\bL\ph)(y) \,dy=\,\bL \int
\,J(x-\,y)\,\ph(y) \,dy\,.
$$
For a second order elliptic operator as above, one has%
\begin{equation}
J(x)=\,c\,\big(\,\sum\, A_{i\,j}\,x_i x_j\,\big)^{\frac{2-\,n}{2}}\,,%
\label{janota}
\end{equation}
where $A_{i\,j}$ denotes the cofactor of $a_{i\,j}$ in the
determinant $|\,a_{i\,j}\,|\,.$\par%
Following \cite{JBS}, we denote by $\bS$ the operator
\begin{equation}
(\bS\,\ph)(x)=\,\int \,J(x-\,y)\,\ph(y) \,dy=\,(J\ast\ph)(x)\,.%
\label{beesse}
\end{equation}
Note that in the constant coefficients case considered here, the
operator $\bT$ introduced in reference \cite{JBS} vanishes.\par%

Due to the structure of the function $\,\sg(x)\,$ appearing in
equation \eqref{jicas}, it readily follows that second order
derivatives of $(\bS\,\ph)(x)$ have the form $\,\pa_i\,\pa_j
\,\bS\,\ph=\,\mK_{i\,j} \ast \phi\,,$ where the $\mK_{i\,j}$ enjoy
the properties described for $\mK$ in section \ref{apotes}.\par%
We write, in abbreviate form,
\begin{equation}
\na^2\,\bS\,\ph\,(x)=\,\int \,\mK(x-y)\,\phi(y)\,dy\,,%
\label{convintos}
\end{equation}
where $\,\mK(x)\,$ enjoys the properties described at the beginning
of section \ref{apotes}.\par%
As remarked in \cite{JBS} "Lemma" A, if $\ph$ is compact supported
and sufficiently regular (for instance of class $\,C^2\,$), then
\begin{equation}
\ph=\, \bS \bL\,\ph\,,%
\label{slfi}
\end{equation}
Furthermore, if $\ph=\,\bS\,f\,,$ then $\bL\,\ph=\,f\,.$ Formally,
one has $\bS\,\bL=\,\bL \bS=\,Id\,.$

\vspace{0.2cm}

Lets prove Theorem \ref{laplolas}. Fix a no-negative $C^\infty\,$
function $\te$, defined for $\,0\leq t \leq 1\,$ such that
$\te(t)=\,1$ for $\,0\leq t \leq \frac13\,,$ and $\te(t)=\,0$ for
$\,\frac23\leq t \leq 1\,.$ Further fix a positive real $\,R\,$, for
convenience $0<R<\,\frac12\,,$ and define
\begin{equation}
\z(x)= \left\{
\begin{array}{l}
1 \quad \textrm{for} \quad |x|\leq R\,,\\
\\
\te \big(\frac{|x|-\,R}{R}\big) \quad \textrm{for} \quad R\leq
|x|\leq 2\,R\,.
\end{array}
\right.%
\label{zetaze}
\end{equation}
Let $u \in D^{2,\,\al-1}(2R)\,$ be such that $\,\bL\,u \in
D^{2,\,\al}(2R)\,,$ and set
$$
\ph=\,\z\,u\,.
$$
Note that the support of $\,\ph\,$ is contained in $\,|x|<\,2R\,.$
by appealing to \eqref{slfi}, we may write
$$
\|\na^2\,\phi\,\|_{\,\al-1;\,2R} =\,\|\na^2\, \bS \bL
\phi\,\|_{\,\al-1;\,2R}\,.
$$
From  \eqref{convintos} and Theorem \ref{ofundas}, one gets
$$
\|\,\na^2\,\bS \bL \phi\,\|_{(\al-\,1);\,2R}\leq\,C\,\|\,
\bL \,\phi\,\|_{\al;\,2R}\,.%
$$
On the other hand,
$$
 \bL \phi=\,\z  \bL u+\, N\,,
$$
where $\,\|\,N\,\|_{\al,\,2R}\leq\,C\,
\|\,\z\,\|_{2,\,\al;\,2R}\,\|\,u\,\|_{1,\,\al;\,2R} \,.$ From the
above estimates it readily follows that
$$
\|\na^2\,\phi\,\|_{\,\al-1;\,2R} \leq\,
\|\,\z\,\|_{\,\al;\,2R}\,\|\,\bL\,u\,\|_{\,\al;\,2R}+\,
C\, \|\,\z\,\|_{2,\,\al;\,2R}\,\|\,u\,\|_{1,\,\al;\,2R}\,.%
$$
So,
\begin{equation}
\|\na^2\,u\,\|_{\,\al-1;\,R} \leq\,
\|\,\z\,\|_{2,\,\al;\,2R}\,(\,\|\,\bL\,u\,\|_{\,\al;\,2R}+\,
C \,\|\,u\,\|_{1,\,\al;\,2R}\,)\,.%
\label{acasis}
\end{equation}
Next, we estimate $\|\,\z\,\|_{2,\,\al;\,2R}\,.$ Recall that that
$2R<\,1$. We consider points $|x|$ such that $R\leq\,|x|\leq\,2R\,,$
and left to the reader different situations. Moreover, due to
symmetry, it is sufficient to consider the one dimensional case
$$
\z(t)=\,\te \big(\frac{t-\,R}{R}\big) \quad \textrm{for} \quad R
\leq t\leq 2R\,.
$$
Hence
$$
\z'(t)=\,\te' \big(\frac{t-\,R}{R}\big)\,\frac{1}{R}\,,
$$
and
$$
\z''(t)=\,\te'' \big(\frac{t-\,R}{R}\big)\,\frac{1}{R^2}\,.
$$
Further,
$$
R^2\, |\z''(t_2)-\z''(t_1)| \leq\,\big| \, \te''
\big(\frac{t_2-\,R}{R}\big)
-\,\te''\big(\frac{t_1-\,R}{R}\big)\,\big|\,,
$$
where
$$
\big|\,\frac{t_2-\,R}{R}-\,\frac{t_1-\,R}{R}\,\big|=\,\big|
\frac{t_2-\,t_1}{R}\big|\leq\,\frac13 <\,1\,.
$$
So
$$
\frac{|\z''(t_2)-\z''(t_1)|}{(\,-\log |t_2-\,t_1|\,)^{-\,\al}} \leq
\,[\,\te''\,]_{H(\al)} \, \frac{1}{R^{2+\,\al}}\, \,
\frac{|t_2-\,t_1|^\al}{(\,-\log |t_2-\,t_1|\,)^{-\,\al} }\,.
$$
Since $\,0\leq \,-r\,\log{r} \leq\,1\,,$ for $\,0<\,r\,\leq \,1\,,$
one shows that
$$
[\,\z''\,]_{\,\al;\,2R} \leq\, [\,\te''\,]_{H(\al)} \,
R^{-\,(2+\,\al)}\,.
$$
It readily follows the estimate
\begin{equation}
\|\,\z\,\|_{\,2,\,\al;\,2R} \leq\, C\,\|\,\te\,\|_{H(2,\al)}
\,(\,1+\,R^{-\,1}+\,R^{-\,2}+\,R^{-\,(2+\,\al)}\,)\equiv\,C_\te(R)\,.%
\label{seguias}
\end{equation}
By appealing to \eqref{acasis}, we show that
\begin{equation}
\|\na^2\,u\,\|_{\,\al-1;\,R} \leq\,
C_\te(R)\,(\,\|\,\bL\,u\,\|_{\,\al;\,2R}+\,
C \,\|\,u\,\|_{1,\,\al;\,2R}\,)\,,%
\label{acasis-dois}
\end{equation}
for $0<2R<1\,.$

\vspace{0.2cm}

The above interior regularity result can be extended up to the
boundary by following the argument sketched in part II, section 5.6,
reference \cite{JBS}, which is essentially independent of the
particular functional space. One starts by showing that the estimate
\eqref{acasis-dois} also holds on half-spheres, under the zero
boundary condition on the flat boundary. This is achieved by means
of a "reflection" to the corresponding whole sphere, trough the flat
boundary, as an odd function on the orthogonal direction. Then,
sufficiently small neighborhoods of boundary points may be regularly
mapped, one to one, onto half-spheres. This procedure allows the
desired extension of the estimate \eqref{acasis-dois} to functions
$\,u\,$ defined on sufficiently small neighborhoods of boundary
points, vanishing on the boundary. A well known finite covering
argument lead to the thesis.


\begin{thebibliography}{35}
%
\bibitem{a-d-n}
S.~Agmon, A.~Douglis, and L.~Nirenberg, \emph{Estimates near the
boundary for solutions of elliptic partial differential equations
satisfying general boundary conditions II}, Comm. Pure Appl.
Math.,17 (1964), 35-92.
%
\bibitem{BVJDE}
H.~Beir\~ao da Veiga, \emph{On the solutions in the large of the
two-dimensional flow of a nonviscous incompressible fluid}, J. Diff.
Eq., 54, (1984), no.3, 373-389.
%
\bibitem{BVSTOKES}
H.~Beir\~ao da Veiga, Concerning the existence of classical
solutions to the Stokes system. On the minimal assumptions problem,
\emph{ J. Math. Fluid Mech.}, 16 (2014), 539-550.
%
\bibitem{BV-LMS}
H.~Beir\~ao da Veiga, \emph{An overview on classical solutions to
$2-D$ Euler equations and to elliptic boundary value problems}, in
"Recent Progress in the Theory of the Euler and Navier-Stokes
Equations", \emph{London Math. Soc. Lecture Notes}, forthcoming.
%
\bibitem{BVALBPAO}
H.~Beir\~ao da Veiga, \emph{On some regularity results for $\,2-D\,$
Euler equations and linear elliptic boundary value problems},
submitted.
%
\bibitem{JBS}
L.~Bers, F.~John, and M.~Schechter, \emph{Partial Differential
Equations}, John Wiley and Sons, Inc., New-York, 1964.
%
\bibitem{diening}
L. Diening,  \emph{Maximal function on generalized Lebesgue spaces
$\,L^{p(\cdot)}\,$ }, Math. Inequal. Appl., 7(2):245–253, 2004.
%
\bibitem{fiorenza}
D.V.~Cruz-Uribe and A.~Fiorenza, \emph{Variable Lebesgue Spaces
Foundations and Harmonic Analysis}, Springer, Basel 2013.
%
\bibitem{gilbarg}
D. Gilbarg and N.S. Trudinger, \emph{Elliptic Partial Differential
Equations of Second Order}, Springer-Verlag,
Berlin/Heidelberg/New-York, 1977.
%
\bibitem{miranda}
C.~Miranda, \emph{Equazioni alle Derivate  Parziali di Tipo
Ellittico}, Springer-Verlag, Berlin, 1955.
%
\bibitem{morrey}
C.B.~Morrey, \emph{Multiple Integrals in the Calculus of
Variations}, Springer-Verlag, Berlin-Heidelberg, 1966.
%
\bibitem{necas}
J.~Ne\v{c}as, \emph{Les Methodes Directes en Theorie des Equations
Elliptiques}, Academia, Prague, 1967.
%
\bibitem{samko}
S. Samko, \emph{Convolution type operators in $\,L^{p(x)}\,,$}
Integral Transform, Spec. Funct., 7(1–2):123– 144, 1998.
%
\bibitem{shara}
I.~I.~Sharapudinov,  \emph{The basis property of the Haar system in
the space $\,L^{p(t)}[0,\,1\,]\,$, and the principle of localization
in the mean}, Mat. Sb. (N.S.), 130(172)(2):275–283, 286, 1986.
%
\bibitem{Sol70}
V.A.~Solonnikov, \emph{On Green's Matrices for Elliptic Boundary
Problem I},  Trudy Mat. Inst. Steklov, 110 (1970), 123-170.
%
\bibitem{Sol71} V.A.~Solonnikov, \emph{On Green's Matrices for
Elliptic Boundary Problem II},  Trudy Mat. Inst. Steklov, 116
(1971), 187-226.
 %
\bibitem{yosida}
K.~Yosida, \emph{Functional Analysis}, Springer-Verlag,
Berlin-Heidelberg, 2nd edition, 1968.
%
\bibitem{zhikov1}
V. V. Zhikov, \emph{On the homogenization of nonlinear variational
problems in perforated domains}, Russian J. Math. Phys.,
2(3):393–408, 1994.
%
\bibitem{zhikov2}
V. V. Zhikov,  \emph{On Lavrentiev's phenomenon}, Russian J. Math.
Phys., 3(2):249–269, 1995.
%
\bibitem{zhikov3}
V. V. Zhikov, \emph{On some variational problems}, Russian J. Math.
Phys., 5(1):105–116 (1998), 1997.
%
\end{thebibliography}
\end{document}